\documentclass[final,onefignum,onetabnum]{siamart171218}

\usepackage{inputenc}
\usepackage{amsmath,amssymb,amsfonts,color,bm}
\usepackage{graphicx}
\usepackage{epstopdf}
\usepackage{caption}
\usepackage{latexsym}
\usepackage{multirow}
\usepackage{subcaption}
\usepackage{float}
\usepackage{verbatim}
\usepackage{enumitem}
\usepackage{algorithmic}
\usepackage{amsopn}

\def\R{{\mathbb R}}

\def\N{{\mathbb N}}

\newcommand{\ts}{\textstyle}
\newcommand{\ds}{\displaystyle}
\newcommand{\bs}{\boldsymbol}

\ifpdf
  \DeclareGraphicsExtensions{.eps,.pdf,.png,.jpg}
\else
  \DeclareGraphicsExtensions{.eps}
\fi

\newsiamremark{remark}{Remark}
\newsiamthm{assumption}{Assumption}
\newsiamremark{hypothesis}{Hypothesis}
\crefname{hypothesis}{Hypothesis}{Hypotheses}
\newsiamthm{claim}{Claim}

\headers{Shape-driven interpolation with discontinuous kernels}{S. De Marchi, W. Erb, F. Marchetti, E. Perracchione, M. Rossini}

\title{Shape-driven interpolation with discontinuous kernels: error analysis, edge extraction and applications in MPI}

\author{S. De Marchi\thanks{Dipartimento di Matematica \lq\lq Tullio Levi-Civita\rq\rq, Universit\`a di Padova, \email{demarchi@math.unipd.it}}
\and W. Erb\thanks{Dipartimento di Matematica \lq\lq Tullio Levi-Civita\rq\rq, Universit\`a di Padova, \email{erb@math.unipd.it}}
\and F. Marchetti\thanks{Dipartimento di Salute della Donna e del Bambino, Universit\`a di Padova, \email{francesco.marchetti.1@phd.unipd.it}}
\and E. Perracchione\thanks{Dipartimento di Matematica \lq\lq Tullio Levi-Civita\rq\rq, Universit\`a di Padova, \email{emma.perracchione@math.unipd.it}}
\and M. Rossini\thanks{Dipartimento di Matematica e Applicazioni, Universit\`a di Milano - Bicocca, \email{milvia.rossini@unimib.it }}
}

\ifpdf
\hypersetup{
  pdftitle={Shape-driven interpolation with discontinuous kernels},
  pdfauthor={S. De Marchi, W. Erb, F. Marchetti, E. Perracchione, M. Rossini}
}
\fi

\usepackage[twoside=false]{geometry}

\begin{document}

\maketitle

\begin{abstract}
Accurate interpolation and approximation techniques for functions with discontinuities are key tools in many applications as, for instance, medical imaging. In this paper, we study an RBF type method for scattered data interpolation that incorporates discontinuities via a variable scaling function. For the construction of the discontinuous basis of kernel functions, information on the edges of the interpolated function is necessary. We characterize the native space spanned by these kernel functions and study error bounds in terms of the fill distance of the node set. To extract the location of the discontinuities, we use a segmentation method based on a classification algorithm from machine learning. The conducted numerical experiments confirm the theoretically derived convergence rates in case that the discontinuities are a priori known. Further, an application to interpolation in magnetic particle imaging shows that the presented method is very promising.
\end{abstract}

\begin{keywords}
Meshless approximation of discontinuous functions; radial basis function (RBF) interpolation; variably scaled discontinuous kernels (VSDKs); Gibbs phenomenon; segmentation and classification with kernel machines, Magnetic Particle Imaging (MPI)
\end{keywords}

\begin{AMS}
41A05, 41A25, A1A30, 65D05
\end{AMS}

\section{Introduction} \label{sec1}

Data interpolation is an essential tool in medical imaging. It is required for geometric alignment, registration of images, to enhance the quality on display devices, or to reconstruct the image from a compressed amount of data
\cite{Bushberg,Lehmann,Thevenaz}. Interpolation techniques are needed in the generation of images as well as in post-processing steps. In medical inverse problems as computerized tomography (CT) and magnetic resonance imaging (MRI), interpolation is used in the reconstruction process in order to fit the discrete Radon data into the back projection step. In single-photon emission computed tomography (SPECT) regridding the projection data improves the reconstruction quality while reducing acquisition times \cite{Takaki}. In Magnetic Particle Imaging (MPI), the number of calibration measurements can be reduced by interpolation methods \cite{KEASKB2016}.

In a general interpolation framework, we are given a finite number of data values sampled from an unknown function $f$ on a node set $\mathcal{X}\in\Omega,$ $\Omega\subseteq \R^d$. The goal of every interpolation scheme is to recover, in a faithful way, the function $f$ on the entire domain $\Omega$ or on a set of evaluation points. The choice of the interpolation model plays a crucial role for the quality of the reconstruction. If the function $f$ belongs to the interpolation space itself, $f$ can be recovered exactly. On the other hand, if the basis of the interpolation space does not reflect the properties of $f$, artifacts will usually appear in the reconstruction. In two-dimensional images such artifacts occur for instance if the function $f$ describing the image has sharp edges, i.e. discontinuities across curves in the domain $\Omega$. In this case, smooth interpolants get highly oscillatory near the discontinuity points.

This is a typical example of the so-called Gibbs phenomenon. This phenomenon was originally formulated in terms of overshoots that arise when univariate functions with jump discontinuities are approximated by truncated Fourier expansions, see \cite{Zygmund}. Similar artifacts arise also in higher dimensional Fourier expansions and when interpolation operators are used. In medical imaging like CT and MRI, such effects are also known as ringing or truncation artifacts \cite{Czervionke}.

The Gibbs phenomenon is also a well-known issue for other basis systems like wavelets or splines, see \cite{Jerri} for a general overview.  Further, it appears also in the context of radial basis function (RBF) interpolation \cite{Fornberg_gibbs}. The effects of the phenomenon can usually be softened by applying additional smoothing filters to the interpolant.
For RBF methods, one can for instance use linear RBFs in regions around discontinuities \cite{Jung}. Furthermore, post-processing techniques, such as Gegenbauer reconstruction procedure \cite{Gottlieb} or digital total variation \cite{sarra_post}, are available.

\subsection*{Main contributions}
In this work, we provide a shape-driven method to interpolate \emph{scattered} data sampled from  discontinuous functions. Novel in our approach is that the interpolation space is modelled according to edges (known or estimated) of the function. In order to do so we consider variably scaled kernels (VSKs) \cite{Bozzini1,rossini} and their discontinuous extension \cite{demarchi18}. Starting from a classical kernel $K$, we define a basis that reflects discontinuities in the data. These basis functions, referred to as variably scaled discontinuous kernels (VSDKs), strictly depend on the given data. In this way, they intrinsically provide an effective tool capable to interpolate functions with given discontinuities in a faithful way and to avoid overshoots near the edges.

If the edges of the function $f$ are explicitly known, we show that the proposed interpolation model outperforms the classical RBF interpolation and avoids Gibbs artifacts.
From the theoretical point of view, we provide two main results. If the kernel $K$ is algebraically decaying in the Fourier domain, we characterize the native space of the VSDK as a piecewise Sobolev space. This description allows us to derive in a second step convergence rates of the discontinuous interpolation scheme in terms of a fill distance for the node set in the domain $\Omega$. The VSDK convergence rates are significantly better than the convergence rates for standard RBF interpolation. Numerical experiments confirm the theoretical results and point out that even better rates are possible if the kernel $K$ involved in the definition of the VSDK is analytic.

In applied problems, as medical imaging, the edges of $f$ are usually not a priori known. In this case, we need reliable edge detection or image segmentation algorithms that estimate edges (position of jumps) from the given data. For this reason, we encode in the interpolation method an additional segmentation process based on a classification algorithm that provides the edges via a kernel machine. As labels for the classification algorithm we can use thresholds based on function values or on RBF coefficients \cite{romani}. The main advantage of this type of edge extraction process is that it works directly for scattered data, in contrast to other edge detection schemes such as Canny or Sobel detectors that usually require an underlying grid (cf. \cite{Canny,Sharifi}).

\subsection*{Outline} In Section \ref{sec2}, we recall the basic notions on kernel based interpolation. 
Then, in Section \ref{sec2b} we present the theoretical findings on the characterization of the VSDK native spaces (if the discontinuities are known) and Sobolev-type error estimates of the corresponding interpolation scheme. In
Section \ref{sec3}, numerical experiments confirm the theoretical convergence rates and reveal that if the edges are known, the VSDK interpolant outperforms the classical RBF interpolation. Beside these experiments, we show how the interpolant behaves with respect to perturbations of the scaling function that models the discontinuities. We review image segmentation via classification and machine learning tools in Section \ref{sec4} and summarize our new approach. In Section \ref{sec5}, the novel VSDK interpolation technique incorporating the segmentation algorithm is applied to Magnetic Particle Imaging. Conclusions are drawn in Section \ref{sec6}.

\section{Preliminaries on kernel based interpolation and variably scaled kernels} \label{sec2}

Kernel based methods are powerful tools for scattered data interpolation. In the following, we give a brief overview over the basic terminology. For the theoretical background and more details on kernel methods, we refer the reader to \cite{buhmann03,Fasshauer15,Wendland05}.

\subsection{Kernel based interpolation}
For a given set of scattered nodes ${\cal X} = \{\boldsymbol{x}_1, \ldots, \boldsymbol{x}_N\} \subseteq \Omega$, $\Omega \subseteq \R^d$, and values $f_i \in \R$, $i \in \{1,\ldots, N\}$, we want to find a function $P_f: \Omega \to \mathbb{R}$ that satisfies the interpolation conditions
\begin{equation}
P_f\left( \boldsymbol{x}_i\right) = f_i, \quad i\in \{1,\ldots, N\}.
\label{eq0}
\end{equation}	

We express the interpolant $P_f$ in terms of a kernel
$K: \R^d \times \R^d \to \R$, i.e.,
 \begin{equation}
 \label{eq1}
 P_f\left( \boldsymbol{x}\right)= \sum_{k=1}^{N} c_k K \left( \boldsymbol{x} , \boldsymbol{x}_k  \right), \quad \boldsymbol{x} \in \Omega.
 \end{equation}
 
If the kernel $K$ is symmetric and strictly positive definite, the matrix $A = (A_{ij})$ with the entries 
$ A_{ij}= K \left( \boldsymbol{x}_i , \boldsymbol{x}_j  \right)$, $1\leq i,j\leq N$, is positive definite for all possible sets of nodes. In this case, the coefficients $c_k$ are uniquely determined by the interpolation conditions in \eqref{eq0} and can be obtained by solving the
linear system
$ A \boldsymbol{c} = \boldsymbol{f},$ where $  \boldsymbol{c}= \left(c_1, \ldots,
 c_N\right)^{\intercal}$, and $  \boldsymbol{f} =\left(f_1, \ldots , f_N\right)^{\intercal}$.

Moreover, there exists a so-called native space for the kernel $K$, that is a Hilbert space ${\cal N}_{K}(\Omega)$ with inner product $(\cdot, \cdot)_{{\cal	N}_{K}(\Omega)}$ in which the kernel $K$ is reproducing, i.e., for any $f \in {\cal	N}_{K}(\Omega)$ we have the identity
\[ f(x) = (f,K(\cdot,x))_{{\cal	N}_{K}(\Omega)}, \quad x \in \Omega.\]
Following \cite{Wendland05}, we introduce the native space by defining the space
  \begin{equation*}
  H_K(\Omega) =  \mathrm{span} \left\{ K(\cdot,\boldsymbol{y}), \hskip 0.2cm \boldsymbol{y} \in \Omega \right\}
  \end{equation*}
equipped with the bilinear form 
\begin{equation} \label{eq:innerproduct}
\left(f,g\right)_{H_K(\Omega)} = \sum_{i=1}^N \sum_{j=1}^M a_i b_j K(\bs{x}_i,\bs{y}_j),
\end{equation}
where  $f,g \in H_K(\Omega)$ with $f(\bs{x}) = \sum_{i=1}^N a_i K(\bs{x},\bs{x}_i)$ and $g(\bs{x}) = \sum_{j=1}^M b_j K(\bs{x},\bs{y}_j).$
The space $\left(f,g\right)_{H_K(\Omega)}$ equipped with  $\left(f,g\right)_{H_K(\Omega)}$ is an inner product space with reproducing kernel $K$ (see \cite[Theorem 10.7]{Wendland05}). The native space ${\cal	N}_{K}(\Omega)$ of the kernel $K$ is then defined as the completion of $H_K(\Omega)$ with respect to the norm $||\cdot||_{H_K(\Omega)} = \sqrt{(\cdot, \cdot)_{H_K(\Omega)}}$. In particular for all $f \in H_K(\Omega)$ we have $||f||_{{\cal	N}_{K}(\Omega)}= ||f||_{H_K(\Omega)}$.

\subsection{Variably scaled kernels} \label{subsec:VSK}

Variaby scaled kernels (VSKs) were introduced in \cite{Bozzini1}. They depend on a \emph{scaling} function $\psi\;:\;\R^d\rightarrow \R.$ 
\begin{definition}\label{def_vsk}
	Let $K: \R^{d+1} \times \R^{d+1} \to \R$ be
	a continuous strictly positive definite kernel. Given a scaling function $\psi\;:\;\R^d\rightarrow \R,$
	 a variably scaled kernel $K_{\psi}$ on $\R^d \times \R^d$ is defined as
	\begin{equation}\label{def_vsk_eq}
	K_{\psi}(\bs{{x}},\bs{{y}}):= K((\bs{{x}},\psi(\bs{x})),(\bs{y},\psi(\bs{y})),
	\end{equation}
	for $\bs{x},\bs{y}\in\R^d$.
\end{definition}
The so given VSK $K_{\psi}$ is strictly positive definite on $\R^d$. 
Suitable choices of the scaling function $\psi$ allow to improve stability and recovery quality of the 
kernel based interpolation, as well as to preserve shape properties of the original function, 
see e.g. the examples in \cite{Bozzini1}, \cite{demarchi18} and \cite{rossini}.

In this paper we consider  radial kernels
$K: \R^d \times \R^d \to \R$, i.e.,
\begin{equation} \label{def:rbfkernel}
K(\boldsymbol{x},\boldsymbol{y}) = \phi( ||\boldsymbol{x}-\boldsymbol{y}||_2), \quad \boldsymbol{x},\boldsymbol{y}\in \Omega,
\end{equation}
with a continuous {scalar} function $ \phi: [0, \infty) \to \mathbb{R}$. The function $\phi$ is called   radial basis function (RBF). In this case a VSK has the form
\begin{equation}\label{eqvskr}
K_{\psi}(\bs{{x}},\bs{{y}}) = \phi \left( \sqrt{||\boldsymbol{x}-\boldsymbol{y}||_2^2 + |\psi(\bs{x})-\psi(\bs{y})|^2}\right).
\end{equation}

  
\subsection{Variably scaled kernels with discontinuities} \label{subsec:VSDK}
Our goal is to introduce interpolation spaces based on discontinuous basis functions on $\Omega$. For the definition of these spaces, we use a piecewise continuous scaling function $\psi$. The associated VSK $K_{\psi}(\bs{{x}},\bs{{y}})$ is then also only piecewise continuous and denoted as variably scaled discontinuous kernel (VSDK). 

We consider the following setting:
\begin{assumption} \label{ass:1}
We assume that:
\begin{itemize}
\item[(i)] The bounded set $\Omega\subset \R^d$ is
the union of $n$ pairwise disjoint sets $\Omega_i$, $i \in \{1, \ldots, n\}$.
\item[(ii)] The subsets $\Omega_i$ satisfy an interior cone condition and have a Lipschitz boundary.
\item[(iii)] Let $\Sigma = \{\alpha_1, \ldots, \alpha_n\}$, $\alpha_i \in \R$. The function $\psi: \Omega \to \Sigma$ is piecewise constant so that $\psi(\bs{x}) = \alpha_i$ for all $\bs{x} \in \Omega_i$. In particular, the discontinuities of $\psi$ appear only at the boundaries of the subsets $\Omega_i$. We assume that $\alpha_i \neq \alpha_j$ if $\Omega_i$ and $\Omega_j$ are neighboring sets.
\end{itemize}
 
\end{assumption}

The kernel function $K_{\psi}(\bs{{x}},\bs{{y}})$ based on a piecewise constant scaling function 
$\psi$ is well defined for all $\bs{{x}},\bs{{y}} \in \Omega$. 
If $\bs{{x}}$ and $\bs{{y}}$ are contained in the same subset $\Omega_i \subset \Omega$ then
$K_{\psi}(\bs{{x}},\bs{{y}}) = K(\bs{{x}},\bs{{y}})$.
We denote the graph of the function $\psi$ with respect to the domain $\Omega$ by
$$G_{\psi}(\Omega) = \{(\bs{{x}},\psi(\bs{x})) \ | \ \bs{{x}} \in \Omega\} \subset \Omega \times \Sigma. $$

To have a more compact notation for the elements of the graph $G_{\psi}(\Omega)$, we use the shortcuts $\bs{\tilde{x}} = (\bs{{x}},\psi(\bs{x}))$ and $\bs{\tilde{y}} = (\bs{{y}},\psi(\bs{y}))$.
In the same way as in \eqref{eq1}, we can define an interpolant for
the nodes $\tilde{\mathcal{X}} = \{ \bs{\tilde{x}}_1, \ldots, \bs{\tilde{x}}_N\}$ on the graph $G_{\psi}(\Omega)$. Using the kernel $K$ on $G_{\psi}(\Omega) \subset \Omega \times \Sigma \subset \R^{d+1}$, we obtain an interpolant of the form
\begin{equation}
\label{eq2}
P_f\left(\bs{\tilde{x}}\right)= \sum_{k=1}^{N} c_k K( \bs{\tilde{x}},\bs{\tilde{x}}_k).
\end{equation}
Based on this interpolant on $G_{\psi}(\Omega)$, we define the VSDK interpolant
$V_f$ on $\Omega$ as
\begin{equation}
 \label{eq3}
V_f(\bs{x}) = P_f\left( \tilde{\boldsymbol{x}}\right) = \sum_{k=1}^{N} c_k K_{\psi}(\bs{{x}},\bs{{x}}_k), \quad \boldsymbol{x} \in \Omega.
\end{equation}

The coefficients $c_1, \ldots, c_N$ of the VSDK interpolant $V_f(\bs{x})$ in \eqref{eq3} are obtained
by solving the linear system of equations
\begin{equation}
\begin{pmatrix}
K(\tilde{\bs{x}}_1 , \tilde{\bs{x}}_1) & \cdots & K(\tilde{\bs{x}}_1 , \tilde{\bs{x}}_N)\\
\vdots & & \vdots\\
K(\tilde{\bs{x}}_N , \tilde{\bs{x}}_1) & \cdots & K(\tilde{\bs{x}}_N , \tilde{\bs{x}}_N)\\
\end{pmatrix}
\begin{pmatrix}
c_1 \\ \vdots \\ c_N
\end{pmatrix}
=
\begin{pmatrix}
f_1  \\ \vdots \\ f_N
\end{pmatrix}.
\label{eq:coeffVSDK}
\end{equation}

In fact, the so obtained coefficients are precisely the coefficients for the interpolant \eqref{eq2} for the node points ${\cal \tilde{X}}$ on
the graph $G_{\psi}(\Omega)$. Since the kernel $K : G_{\psi}(\Omega) \times G_{\psi}(\Omega) \to \R$ is strictly positive definite, the system \eqref{eq:coeffVSDK} admits a unique solution.
For the kernel $K : G_{\psi}(\Omega) \times G_{\psi}(\Omega) \to \R$ and the discontinuous kernel $K_{\psi}: \Omega \times \Omega \to \R$ we can further
define the two inner product spaces
  \begin{align*}
  H_K(G_{\psi}(\Omega)) &=  \mathrm{span} \left\{ K(\cdot,\tilde{\boldsymbol{y}}), \hskip 0.2cm \tilde{\boldsymbol{y}} \in G_{\psi}(\Omega) \right\},\\
  H_{K_\psi}(\Omega) &=  \mathrm{span} \left\{ K_{\psi}(\cdot,\boldsymbol{y}), \hskip 0.2cm \boldsymbol{y} \in \Omega \right\},
  \end{align*}
with the inner products given as in \eqref{eq:innerproduct}. For both spaces, we can take the completion and obtain in this way the native spaces
 ${\cal	N}_{K}(G_{\psi}(\Omega))$ and ${\cal N}_{K_{\psi}}(\Omega)$, respectively. We have the following relation between the two native spaces.

  \begin{proposition} \label{prop-extensiontograph}
  	The native spaces ${\cal N}_{K}(G_{\psi}(\Omega))$ and ${\cal	N}_{K_{\psi}}(\Omega)$
  	are isometrically isomorphic.
  \end{proposition}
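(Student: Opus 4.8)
The plan is to exhibit an explicit isometry between the two pre-native spaces $H_K(G_{\psi}(\Omega))$ and $H_{K_{\psi}}(\Omega)$ and then extend it to the completions. The whole construction rests on the map $\iota: \Omega \to G_{\psi}(\Omega)$, $\iota(\bs{x}) = \bs{\tilde{x}} = (\bs{x},\psi(\bs{x}))$, whose inverse is the projection $(\bs{x},\psi(\bs{x})) \mapsto \bs{x}$ onto the first $d$ coordinates. Since $\psi$ is a genuine function, $\iota$ is a bijection, and the defining identity $K_{\psi}(\bs{x},\bs{y}) = K(\bs{\tilde{x}},\bs{\tilde{y}})$ from \eqref{def_vsk_eq} is precisely the statement that $K_{\psi}$ is the pullback of $K$ along $\iota$. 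This single observation is the conceptual heart of the argument.

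First I would define the linear map $T: H_K(G_{\psi}(\Omega)) \to H_{K_{\psi}}(\Omega)$ by composition with $\iota$, that is $(TF)(\bs{x}) = F(\bs{\tilde{x}})$. Because $\iota$ is a bijection, $T$ is simply the pullback of functions on $G_{\psi}(\Omega)$ to functions on $\Omega$; it is therefore automatically well defined and injective as a map of function spaces, independently of any coefficient representation (equality $F_1\circ\iota = F_2\circ\iota$ forces $F_1 = F_2$ on $G_{\psi}(\Omega)$ since $\iota$ is onto). Applied to a generator this gives $T(K(\cdot,\bs{\tilde{y}})) = K_{\psi}(\cdot,\bs{y})$; as $\bs{\tilde{y}}$ ranges over $G_{\psi}(\Omega)$, the point $\bs{y}$ ranges over $\Omega$, so $T$ carries the generators of $H_K(G_{\psi}(\Omega))$ bijectively onto those of $H_{K_{\psi}}(\Omega)$ and is in particular surjective onto $H_{K_{\psi}}(\Omega)$.

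Next I would verify that $T$ preserves the inner product \eqref{eq:innerproduct}. For $F = \sum_i a_i K(\cdot,\bs{\tilde{y}}_i)$ and $G = \sum_j b_j K(\cdot,\bs{\tilde{z}}_j)$ one has $TF = \sum_i a_i K_{\psi}(\cdot,\bs{y}_i)$ and $TG = \sum_j b_j K_{\psi}(\cdot,\bs{z}_j)$, and the key identity $K_{\psi}(\bs{y}_i,\bs{z}_j) = K(\bs{\tilde{y}}_i,\bs{\tilde{z}}_j)$ yields
\[
(TF,TG)_{H_{K_{\psi}}(\Omega)} = \sum_{i,j} a_i b_j K_{\psi}(\bs{y}_i,\bs{z}_j) = \sum_{i,j} a_i b_j K(\bs{\tilde{y}}_i,\bs{\tilde{z}}_j) = (F,G)_{H_K(G_{\psi}(\Omega))}.
\]
Thus $T$ is a linear, inner-product-preserving bijection between the two pre-Hilbert spaces. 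Since $T$ is an isometry between dense subspaces of the respective native spaces, it extends uniquely to a bounded linear map $\overline{T}: {\cal N}_{K}(G_{\psi}(\Omega)) \to {\cal N}_{K_{\psi}}(\Omega)$ on the completions, and applying the same reasoning to $T^{-1}$ supplies the inverse, so $\overline{T}$ is an isometric isomorphism.

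I do not expect a serious obstacle here; the proof is essentially a bookkeeping argument driven by the pullback identity. The only two points that genuinely require care are the well-definedness of the bilinear form \eqref{eq:innerproduct} independently of the chosen representation, which is guaranteed by the reproducing property via \cite[Theorem 10.7]{Wendland05}, and the standard fact that an isometry between dense subspaces extends uniquely to an isometric isomorphism of their completions. Everything else follows mechanically from the fact that $K_{\psi}$ is the pullback of $K$ under the graph bijection $\iota$.
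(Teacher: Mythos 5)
Your proof is correct and follows essentially the same route as the paper, which simply notes (citing \cite[Theorem 2]{Bozzini1}) that $H_K(G_{\psi}(\Omega))$ and $H_{K_\psi}(\Omega)$ are isometric via the graph identification and that the isometry passes to the completions. You have merely written out in full the pullback map and the inner-product computation that the paper leaves implicit, and both steps are sound.
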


In the same way as in \cite[Theorem 2]{Bozzini1}, Proposition \ref{prop-extensiontograph} follows from the fact that the two inner product spaces $H_K(G_{\psi}(\Omega))$ and $H_{K_\psi}(\Omega)$ are isometric. Then, the same holds true for their respective completion.

\section{Approximation with discontinuous kernels} \label{sec2b}

\subsection{Characterization of the native space for VSDKs}
Based on the decomposition of the domain $\Omega$ described in Assumption \ref{ass:1} we define for
$s \geq 0$ and $1 \leq p \leq \infty$ the following spaces of piecewise smooth functions on $\Omega$:
\[\mathrm{WP}_p^s(\Omega) := \left\{ f: \Omega \to \R \ | \ f_{\Omega_i} \in \mathrm{W}_p^s(\Omega_i), \quad i \in \{1, \ldots, n\} \right\}. \]
Here, $f_{\Omega_i}$ denotes the restriction of $f$ to the subregion $\Omega_i$ and
$\mathrm{W}_p^s(\Omega_i)$ denote the standard Sobolev spaces on $\Omega_i$.
As norm on $\mathrm{WP}_p^s(\Omega)$ we set
\[\|f\|_{\mathrm{WP}_p^s(\Omega)}^p = \sum_{i = 1}^n \|f_{\Omega_i}\|_{\mathrm{W}_p^s(\Omega_i)}^p. \]
The piecewise Sobolev space $\mathrm{WP}_p^s(\Omega)$ and the corresponding norm strongly depend on the chosen decomposition of the domain $\Omega$. However, for any decomposition of $\Omega$ in 
Assumption \ref{ass:1}, the standard Sobolev space $\mathrm{W}_p^s(\Omega)$ is contained in $\mathrm{WP}_p^s(\Omega)$. In the following, we assume that the radial kernel $K$ defining the VSDK $K_{\psi} $ 
has a particular Fourier decay:
\begin{equation} \label{Fourierdecay}
 \widehat {\phi(\|\cdot\|)}(\bs{\omega}) \sim (1+\|\bs{\omega}\|_2^2)^{-s-\frac{1}{2}}, \quad s > \frac{d-1}{2}.
\end{equation}

In order to characterize the native space ${\cal N}_{K_{\psi}}(\Omega)$, we need some additional results regarding the continuity of trace and extension operators. For the reader's convenience, we list some relevant
results from the literature.

\begin{lemma} \label{lem:exttrace} We have the following relations for extension and trace operators in the
native spaces (a) and in the Sobolev spaces (b):
\begin{enumerate}
\item[(a)] {\textnormal{(\cite[Theorem 10.46 \& Theorem 10.47]{Wendland05} or \cite[Section 9]{Schaback1999})}} Every $f \in {\cal N}_{K}(G_{\psi}(\Omega))$ has a natural extension $E f \in {\cal N}_{K}(\R^{d+1})$. Further, $$\|Ef\|_{{\cal N}_{K}(\R^{d+1})} = \|f\|_{{\cal N}_{K}(G_{\psi}(\Omega))}.$$
For every $g \in {\cal N}_{K}(\R^{d+1})$, the trace $T_{G_{\psi}(\Omega)} g$ is contained in ${\cal N}_{K}(G_{\psi}(\Omega))$. Further, $$\| T_{G_{\psi}(\Omega)} g\|_{{\cal N}_{K}(G_{\psi}(\Omega))}
\leq \|g\|_{{\cal N}_{K}(\R^{d+1})}.$$
\item[b)] {\textnormal{(\cite[Theorem 7.39]{adams2003})}}  Let $s > \frac{1}{2}$. For every $g \in \mathrm{W}^s_2(\R^{d+1})$ the trace $T_{G_{\psi}(\Omega)} g$ is contained in $\mathrm{W}^{s-1/2}_2(G_{\psi}(\Omega))$ and the trace operator
$T_{G_{\psi}(\Omega)}: \mathrm{W}^s_2(\R^{d+1}) \to \mathrm{W}^{s-1/2}_2(G_{\psi}(\Omega))$
is bounded. \\
Further, there exists a bounded extension operator $E: \mathrm{W}^{s-1/2}_2(G_{\psi}(\Omega)) \to \mathrm{W}^s_2(\R^{d+1})$ such that
$ T_{G_{\psi}(\Omega)} E  f = f$ for all $f \in \mathrm{W}^{s-1/2}_2(G_{\psi}(\Omega))$.
\end{enumerate}
\end{lemma}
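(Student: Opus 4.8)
For part (a), I would argue entirely within the reproducing-kernel Hilbert space (RKHS) framework, which is exactly the content of \cite[Theorem 10.46 \& Theorem 10.47]{Wendland05}. The native space ${\cal N}_{K}(\R^{d+1})$ is the RKHS of $K$, and the restricted kernel $K|_{G_{\psi}(\Omega) \times G_{\psi}(\Omega)}$ generates precisely ${\cal N}_{K}(G_{\psi}(\Omega))$. The key structural fact is the orthogonal decomposition of ${\cal N}_{K}(\R^{d+1})$ into the closed subspace $V_0 = \{g : g|_{G_{\psi}(\Omega)} = 0\}$ and its orthogonal complement $V_0^{\perp} = \overline{\mathrm{span}}\{K(\cdot,\bs{\tilde{y}}) : \bs{\tilde{y}} \in G_{\psi}(\Omega)\}$. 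Given $f \in {\cal N}_{K}(G_{\psi}(\Omega))$, its extension $E f$ is taken to be the unique element of $V_0^{\perp}$ restricting to $f$; since the reproducing identity on $G_{\psi}(\Omega)$ is inherited verbatim from $\R^{d+1}$, one checks that $f \mapsto E f$ is a linear isometry, so $\|Ef\|_{{\cal N}_{K}(\R^{d+1})} = \|f\|_{{\cal N}_{K}(G_{\psi}(\Omega))}$. The trace bound is then immediate: for $g \in {\cal N}_{K}(\R^{d+1})$, write $g = E(Tg) + (g - E(Tg))$ with $E(Tg) \in V_0^{\perp}$ and the remainder in $V_0$; by orthogonality $\|T_{G_{\psi}(\Omega)} g\|_{{\cal N}_{K}(G_{\psi}(\Omega))} = \|E(Tg)\|_{{\cal N}_{K}(\R^{d+1})} \leq \|g\|_{{\cal N}_{K}(\R^{d+1})}$.

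For part (b), I would invoke the classical Sobolev trace theorem for a set of codimension one, \cite[Theorem 7.39]{adams2003}. The only point requiring care is that $G_{\psi}(\Omega)$ is not a smooth connected hypersurface but the union of the flat pieces $\Omega_i \times \{\alpha_i\}$ sitting at distinct heights $\alpha_i$. Because $\psi$ is piecewise constant by Assumption \ref{ass:1}(iii), each piece lies in a horizontal hyperplane and is a Lipschitz domain by Assumption \ref{ass:1}(ii); moreover the pieces are separated in the vertical direction whenever the corresponding $\Omega_i$ are neighbors. I would therefore apply the trace theorem on each affine piece separately: on $\Omega_i \times \{\alpha_i\}$ the trace of a $\mathrm{W}^{s}_2(\R^{d+1})$ function lies in $\mathrm{W}^{s-1/2}_2(\Omega_i)$ with a bounded trace operator and a bounded right-inverse extension (for instance by reflection and extension in the normal variable). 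Summing the squared piecewise norms reproduces the piecewise Sobolev norm on $G_{\psi}(\Omega)$, yielding both the boundedness of $T_{G_{\psi}(\Omega)}$ and the bounded extension $E$ with $T_{G_{\psi}(\Omega)} E f = f$.

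The step I expect to be most delicate is not a deep inequality but the bookkeeping in part (b): verifying that the flat, disconnected geometry of $G_{\psi}(\Omega)$ permits the standard trace theorem to be applied piece by piece, and that the resulting target space is the \emph{piecewise} Sobolev space $\mathrm{W}^{s-1/2}_2(G_{\psi}(\Omega))$ rather than a global Sobolev space. The fact that $\psi$ takes only finitely many values and is constant on each $\Omega_i$ is exactly what decouples the pieces and makes this reduction clean; without the piecewise-constant assumption one would have to control the geometry of a genuinely curved graph, and the argument would be considerably more involved.
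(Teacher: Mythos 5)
The paper does not actually prove this lemma: it is stated purely as a collection of results quoted from the literature (Wendland, Theorems 10.46--10.47, respectively Schaback for part (a); Adams--Fournier, Theorem 7.39 for part (b)), so there is no in-paper argument to compare against. Your reconstruction of part (a) is exactly the standard argument behind the cited theorems: the orthogonal decomposition of ${\cal N}_{K}(\R^{d+1})$ into the functions vanishing on $G_{\psi}(\Omega)$ and the closure of the span of the kernel translates centred there, with the minimal-norm extension as the isometry and the trace bound following from orthogonal projection. That is correct and is what the references do.

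In part (b) your reduction to the flat pieces $\Omega_i \times \{\alpha_i\}$ is the right idea, and you correctly identify that the target space must be read piecewise; the Lipschitz assumption in Assumption \ref{ass:1}(ii) is indeed what lets you extend from $\Omega_i$ to the full hyperplane before invoking the hyperplane trace theorem. The one step that, as written, does not close is the construction of the global extension operator: if you simply sum the per-piece extensions $E_i f_i$, the extension built for the piece at height $\alpha_i$ is a function on all of $\R^{d+1}$ and its trace on a different piece $\Omega_j \times \{\alpha_j\}$ need not vanish, so $T_{G_{\psi}(\Omega)} E f = f$ fails for the naive sum. You need to localize each $E_i f_i$ by a smooth cutoff supported near $\overline{\Omega_i} \times \{\alpha_i\}$ (vertical separation handles neighbouring pieces, since their $\alpha$-values differ; horizontal separation handles non-neighbouring pieces at equal heights), which preserves boundedness and kills the cross terms. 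This is a fixable bookkeeping point rather than a wrong approach, and it is consistent with the delicate step you yourself flagged, but the sketch should state it explicitly.
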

We are now ready to prove the following theorem.
\begin{theorem} \label{thm:char}
Let  Assumption \ref{ass:1} hold true, and assume that the continuous strictly positive definite kernel $K: \R^{d+1} \times \R^{d+1} \to \R$ based on the radial basis function $\phi$ satisfies the decay condition \eqref{Fourierdecay}.
Then, for the discontinuous kernel $K_{\psi}$, we have
\[{\cal N}_{K_{\psi}}(\Omega) = \mathrm{WP}_2^s(\Omega),\]
with the norms of the two Hilbert spaces being equivalent.
\end{theorem}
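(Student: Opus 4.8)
The plan is to establish the identity by composing three separate identifications while tracking norm equivalences throughout. First, Proposition~\ref{prop-extensiontograph} reduces the problem to the graph: since ${\cal N}_{K_\psi}(\Omega)$ and ${\cal N}_{K}(G_\psi(\Omega))$ are isometrically isomorphic, it suffices to show ${\cal N}_{K}(G_\psi(\Omega)) = \mathrm{WP}_2^s(\Omega)$ with equivalent norms. Second, I would invoke the standard Fourier characterization of native spaces: reading \eqref{Fourierdecay} as a decay estimate for the $(d+1)$-dimensional Fourier transform of $\phi(\|\cdot\|)$, the weight $(1+\|\bs{\omega}\|_2^2)^{s+1/2}$ identifies ${\cal N}_{K}(\R^{d+1})$ with the Sobolev space $\mathrm{W}_2^{s+1/2}(\R^{d+1})$, with equivalent norms. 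The whole argument then amounts to comparing, on the $d$-dimensional graph $G_\psi(\Omega)\subset\R^{d+1}$, the native space of $K$ with the Sobolev space $\mathrm{W}_2^s(G_\psi(\Omega))$, both obtained from their $(d+1)$-dimensional counterparts by a trace that costs exactly half a derivative.

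The core step is to prove ${\cal N}_{K}(G_\psi(\Omega)) = \mathrm{W}_2^s(G_\psi(\Omega))$ with equivalent norms, which I would do by a two-sided sandwich chaining the two trace/extension pairs of Lemma~\ref{lem:exttrace}. For the inclusion ``$\subseteq$'', take $f\in{\cal N}_{K}(G_\psi(\Omega))$; extend it by Lemma~\ref{lem:exttrace}(a) to $Ef\in{\cal N}_{K}(\R^{d+1})=\mathrm{W}_2^{s+1/2}(\R^{d+1})$ with $\|Ef\|_{{\cal N}_{K}(\R^{d+1})}=\|f\|_{{\cal N}_{K}(G_\psi(\Omega))}$, and then apply the Sobolev trace of Lemma~\ref{lem:exttrace}(b), which maps $\mathrm{W}_2^{s+1/2}(\R^{d+1})$ boundedly into $\mathrm{W}_2^s(G_\psi(\Omega))$; since $T_{G_\psi(\Omega)}Ef=f$, this yields $\|f\|_{\mathrm{W}_2^s(G_\psi(\Omega))}\lesssim\|f\|_{{\cal N}_{K}(G_\psi(\Omega))}$. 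For ``$\supseteq$'' I would run the chain backwards: extend $f\in\mathrm{W}_2^s(G_\psi(\Omega))$ by the bounded Sobolev extension of Lemma~\ref{lem:exttrace}(b) into $\mathrm{W}_2^{s+1/2}(\R^{d+1})={\cal N}_{K}(\R^{d+1})$ and restrict back by the native-space trace of Lemma~\ref{lem:exttrace}(a), again using that trace after extension returns $f$. Composing the four norm estimates gives the equivalence.

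It remains to identify $\mathrm{W}_2^s(G_\psi(\Omega))$ with $\mathrm{WP}_2^s(\Omega)$. Since $\psi$ is piecewise constant, the graph is the union of the flat horizontal sheets $\Omega_i\times\{\alpha_i\}$, and each sheet is isometric to $\Omega_i$ via the translation deleting the constant last coordinate, so $\mathrm{W}_2^s(\Omega_i\times\{\alpha_i\})$ may be identified with $\mathrm{W}_2^s(\Omega_i)$. The decisive geometric input is Assumption~\ref{ass:1}(iii): since $\alpha_i\neq\alpha_j$ for neighboring sets, every shared boundary is lifted to two distinct heights, so the closures $\overline{\Omega_i}\times\{\alpha_i\}$ are pairwise disjoint and the sheets of $G_\psi(\Omega)$ are separated by a positive distance $\delta>0$. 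This separation lets the Sobolev space on the graph split as a direct sum, $\mathrm{W}_2^s(G_\psi(\Omega))\cong\bigoplus_{i=1}^n\mathrm{W}_2^s(\Omega_i)$, whose norm is exactly $\|f\|_{\mathrm{WP}_2^s(\Omega)}^2=\sum_i\|f_{\Omega_i}\|_{\mathrm{W}_2^s(\Omega_i)}^2$. Combining the three identifications closes the proof.

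I expect the main obstacle to be this last splitting in the fractional-smoothness regime. For integer $s$ the Sobolev norm is purely local, so the direct-sum decomposition over disjoint sheets is immediate; but for non-integer $s=k+\sigma$ the Gagliardo double integral couples distinct sheets, and one must show these cross terms are harmless. Here the positive separation $\delta$ is exactly what is needed: on a pair of distinct sheets the integrand $|f(\bs{\tilde x})-f(\bs{\tilde y})|^2/\|\bs{\tilde x}-\bs{\tilde y}\|^{d+2\sigma}$ has denominator bounded below by $\delta^{d+2\sigma}$, so each cross contribution is dominated by $\delta^{-(d+2\sigma)}$ times the $L^2$-masses on the two sheets, hence by the piecewise norms, giving norm equivalence (which is all that is claimed, isometry being neither expected nor needed). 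A secondary point is to ensure that the trace and extension results of Lemma~\ref{lem:exttrace} genuinely apply to the non-smooth, disconnected graph; this is precisely where Assumption~\ref{ass:1}(ii) (interior cone condition and Lipschitz boundary of each $\Omega_i$), together with the sheet separation, is used, so that one may argue sheet by sheet and patch with a partition of unity subordinate to the separated components.
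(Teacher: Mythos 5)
Your proposal follows essentially the same route as the paper: the identical chain ${\cal N}_{K_{\psi}}(\Omega) \leftrightarrow {\cal N}_{K}(G_{\psi}(\Omega)) \leftrightarrow {\cal N}_{K}(\R^{d+1}) \leftrightarrow \mathrm{W}_2^{s+1/2}(\R^{d+1}) \leftrightarrow \mathrm{W}_2^{s}(G_{\psi}(\Omega)) \leftrightarrow \mathrm{WP}_2^s(\Omega)$, using Proposition~\ref{prop-extensiontograph}, the Fourier characterization of the native space, and the two trace/extension pairs of Lemma~\ref{lem:exttrace} in exactly the roles the paper assigns them. Your treatment of the final identification $\mathrm{W}_2^{s}(G_{\psi}(\Omega)) \cong \mathrm{WP}_2^s(\Omega)$ is in fact more careful than the paper's, which asserts an isometry where (as you correctly note, because of the Gagliardo cross terms between separated sheets for non-integer $s$) only a norm equivalence holds --- and that is all the theorem requires.
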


\begin{proof}
We consider the following forward and backward chain of Hilbert space operators:
\[ {\cal N}_{K_{\psi}}(\Omega) \underset{Q_1}{\overset{P_5}{\leftrightarrows}} {\cal N}_{K}(G_{\psi}(\Omega)) \underset{Q_2}{\overset{P_4}{\leftrightarrows}}
{\cal N}_{K}(\R^{d+1}) \underset{Q_3}{\overset{P_3}{\leftrightarrows}} {\mathrm{W}_2^{s+1/2}}(\R^{d+1})
\underset{Q_4}{\overset{P_2}{\leftrightarrows}} {\mathrm{W}_2^{s}}(G_{\psi}(\Omega))  \underset{Q_5}{\overset{P_1}{\leftrightarrows}} \mathrm{WP}_2^s(\Omega).\]
In the backward direction, the operators $P_1, \ldots, P_5$ are given as
\begin{align*}
 P_1: \quad & P_1 f = \tilde{f}, \quad \text{with} \quad \tilde{f}(\tilde{\bs{x}}) = f(\bs{x}), \quad  \text{for all $\bs{x} \in \Omega$}, \\[-0.5mm]
 P_2: \quad & P_2 f = E f, \quad \text{(Extension in the sense of Lemma \ref{lem:exttrace} (b))} \\[-0.5mm]
 P_3: \quad & P_3 f = f, \\[-0.5mm]
 P_4: \quad & P_4 f = T_{G_\psi(\Omega)} f, \quad \text{(Trace in the sense of Lemma \ref{lem:exttrace} (a))} \\[-1mm]
 P_5: \quad & P_5 \tilde{f} = f, \quad \text{with} \quad f(\bs{x}) = \tilde{f}(\tilde{\bs{x}}),
 \quad \text{for all $\bs{x} \in \Omega$}.
\end{align*}
In the forward chain, the operators $Q_1, \ldots, Q_5$ are similarly defined as
\begin{align*}
 Q_1: \quad & Q_1 f = \tilde{f}, \quad \text{with} \quad \tilde{f}(\tilde{\bs{x}}) = f(\bs{x}), \quad  \text{for all $\bs{x} \in \Omega$}, \\[-0.5mm]
 Q_2: \quad & Q_2 f = E f, \quad \text{(Extension in the sense of Lemma \ref{lem:exttrace} (a))} \\[-0.5mm]
 Q_3: \quad & Q_3 f = f, \\[-0.5mm]
 Q_4: \quad & Q_4 f = T_{G_\psi(\Omega)} f, \quad \text{(Trace in the sense of Lemma \ref{lem:exttrace} (b))} \\[-1mm]
 Q_5: \quad & Q_5 \tilde{f} = f, \quad \text{with} \quad f(\bs{x}) = \tilde{f}(\tilde{\bs{x}}),
 \quad \text{for all $\bs{x} \in \Omega$}.
\end{align*}
All these 10 operators are well defined and continuous: $P_5$ and $Q_1$ are isometries by Proposition \ref{prop-extensiontograph}. $P_2$, $Q_2$ as well as $P_4$, $Q_4$ are continuous by Lemma \ref{lem:exttrace}.
Since $K$ satisfies the condition \eqref{Fourierdecay}, the native space ${\cal N}_{K}(\R^{d+1})$ is equivalent to the Sobolev space $\mathrm{W}_2^s(\R^{d+1})$ (see \cite[Corollary 10.48]{Wendland05}).
Therefore also the identity mappings $P_3$ and $Q_3$ are continuous. Finally,
the Sobolev norm for a function $\tilde{f}$ on the graph $G_{\psi}(\Omega)$ is a reformulation of the norm of $f \in \mathrm{WP}_2^s(\Omega)$. Therefore also the operators $P_1$ and $Q_5$ are isometries.

We can conclude that the concatenations $P_5 P_4 P_3 P_2 P_1$ and $Q_5 Q_4 Q_3 Q_2 Q_1$ are continuous operators. Since $P_5 P_4 P_3 P_2 P_1$ is the inverse to $Q_5 Q_4 Q_3 Q_2 Q_1$, these
two operators therefore provide an isomorphism between the Hilbert spaces ${\cal N}_{K_{\psi}}(\Omega)$
and $\mathrm{WP}_2^s(\Omega)$.
\end{proof}

\subsection{Error estimates for VSDK interpolation}

We state first of all a well-known Sobolev sampling inequality for functions vanishing on the subsets
${\cal X} \cap \Omega_i$ which was developed in \cite{Narcovich05}.
For this we introduce the following regional fill distance $h_i$ on the subset $\Omega_i$:
\[ h_i = \sup_{\bs{x} \in \Omega_i} \inf_{\bs{x}_i \in {\cal X} \cap \Omega_i} \| \bs{x} - \bs{x}_i \|_2.\]

\begin{proposition}[Theorem 2.12 in \cite{Narcovich05} or Proposition 9 in \cite{fuselier}] \label{prop:lenin}
Let $s > 0$, as well as $1 \leq p < \infty$ and $1 \leq q \leq \infty$. Further, let $m\in\N_0$ such that
$\lfloor s \rfloor > m + d/p$ (for $p=1$ also equality is possible) and $u$ be a function
that vanishes on ${\cal X} \cap \Omega_i$. Then, there is a $h_0 > 0$ such that for
$h_i \leq h_0$ and for the subregions $\Omega_i$ satisfying Assumption \ref{ass:1} $(ii)$ we have the Sobolev inequality
\[ \|u\|_{\mathrm{W}_q^{m}(\Omega_i)} \leq C_i h_i^{s-m-d(1/p-1/q)_+} \|u\|_{\mathrm{W}_p^{s}(\Omega_i)}.\]
The constant $C_i > 0$ is independent of $h_i$.
\end{proposition}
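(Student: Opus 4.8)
Since this is the Narcowich--Ward--Wendland sampling inequality specialized to a single subregion $\Omega_i$, my plan is to reduce the estimate on $\Omega_i$ to a family of local estimates on small patches whose diameter is proportional to the fill distance $h_i$, and then to reassemble these by summation. The guiding principle is that a sufficiently smooth function with \emph{scattered zeros} must be small, because on each patch it can be compared against a polynomial that it nearly interpolates, and the interpolation error is controlled by a higher-order Sobolev seminorm.

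First I would cover $\Omega_i$ by a finite family of overlapping regions $\{D_k\}$, each star-shaped with respect to a ball and of diameter comparable to $h_i$. The interior cone condition in Assumption \ref{ass:1}$(ii)$ is exactly what guarantees that such a cover exists with the number of overlaps bounded uniformly in $h_i$. On each $D_k$, the hypothesis $h_i \le h_0$ ensures that the data points of $\mathcal{X} \cap \Omega_i$ lying in $D_k$ form a \emph{norming set} for the space $\Pi_{\lfloor s \rfloor - 1}$ of polynomials of degree below $\lfloor s \rfloor$; the condition $\lfloor s \rfloor > m + d/p$ guarantees that this polynomial degree is large enough for the Bramble--Hilbert step to follow (and in particular exceeds $m$).

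The core local step is a Bramble--Hilbert / polynomial reproduction argument. On each $D_k$ I would pick a polynomial $p_k$ of degree $< \lfloor s \rfloor$ approximating $u$, so that $\|u - p_k\|_{\mathrm{W}_q^m(D_k)}$ is controlled by $h_i^{\,s - m - d(1/p - 1/q)_+}\,\|u\|_{\mathrm{W}_p^s(D_k)}$ after rescaling to a reference patch of unit size and tracking the powers of $h_i$ through the scaling of the Sobolev norms; the factor $d(1/p-1/q)_+$ is the loss incurred by the embedding $\mathrm{W}_p^s \hookrightarrow \mathrm{W}_q^m$ when $q > p$. Because $u$ vanishes at the data points in $D_k$ and these points are a norming set, the polynomial term $p_k$ is itself bounded by the values of $u - p_k$ at those same points, hence again by the same power of $h_i$ times the $\mathrm{W}_p^s$ norm. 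This is where the vanishing hypothesis is used decisively: the sampling term that normally appears on the right-hand side drops out entirely.

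Finally I would raise the local bounds to the $q$-th (resp.\ $p$-th) power and sum over $k$, using the bounded overlap of the cover to convert the sum of local $\mathrm{W}_p^s(D_k)$ contributions back into a single $\mathrm{W}_p^s(\Omega_i)$ norm, which yields the stated inequality with $C_i$ absorbing the geometric constants of $\Omega_i$ together with the norming-set constants, all independent of $h_i$. The main obstacle, and the step requiring the most care, is the norming-set estimate combined with the exact bookkeeping of the $h_i$-powers under rescaling: one must verify that the polynomial reproduction is stable uniformly as $h_i \to 0$ and that the exponent $s - m - d(1/p - 1/q)_+$ emerges correctly from the interplay of the Bramble--Hilbert scaling and the embedding encoded in the $(1/p-1/q)_+$ term. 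Since the result is already established in \cite{Narcovich05,fuselier}, in the paper it suffices to invoke it directly, the subregions $\Omega_i$ satisfying precisely the cone-condition hypotheses that these references require.
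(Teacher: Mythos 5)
Your proposal is correct as a sketch of how this inequality is actually proved, but it differs from the paper in that the paper offers no proof at all: Proposition \ref{prop:lenin} is imported verbatim from the literature (Theorem 2.12 of \cite{Narcovich05}, or Proposition 9 of \cite{fuselier}), and the only thing the authors need to verify is that the subregions $\Omega_i$ satisfy the hypotheses of those references, which Assumption \ref{ass:1}$(ii)$ provides. Your last sentence recognizes this, and everything before it is a faithful reconstruction of the Narcowich--Ward--Wendland argument: cover $\Omega_i$ by patches of diameter $\sim h_i$ that are star-shaped with respect to balls (the cone condition controls the geometry and the overlap count), show that for $h_i \leq h_0$ the zeros of $u$ in each patch form a norming set for polynomials of degree below $\lfloor s \rfloor$, combine a Bramble--Hilbert estimate with the norming-set bound so that the polynomial term is absorbed because $u$ vanishes at the data sites, rescale to track the power $h_i^{\,s-m-d(1/p-1/q)_+}$, and sum over the cover. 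The one point you pass over too quickly is the role of the hypothesis $\lfloor s \rfloor > m + d/p$: beyond ensuring the polynomial degree exceeds $m$, it is what gives the embedding $\mathrm{W}_p^{s}(D_k) \hookrightarrow C(D_k)$ (since $\lfloor s \rfloor > d/p$), without which the point evaluations underlying the norming-set argument would not even be defined; it also governs the $(1/p-1/q)_+$ loss when $q>p$. For the purposes of this paper, though, the citation is the proof, and your reconstruction is a correct account of what stands behind it.
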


The Sobolev sampling inequalities given in Proposition \ref{prop:lenin} allow us to extract the correct
power of the fill distance from the smoothness of the underlying error function.
Based on these inequalities, a similar analysis can be conducted also on manifolds, see
\cite{fuselier}. Further sampling inequalities that can be used as a substitute for Proposition \ref{prop:lenin} can, for instance, be found in \cite[Theorem 2.1.1]{rieger}.

We define now the global fill distance
\[ h = \max_{i \in \{1, \ldots, n\}} h_i, \]
and get as a consequence of the regional sampling inequalities in Proposition \ref{prop:lenin} the following Sobolev error estimate:

\begin{theorem} \label{thm:errorestimate}
Let Assumption \ref{ass:1} be satisfied. Further, let $s > 0$, $1 \leq q \leq \infty$ and $m\in\N_0$ such that $\lfloor s \rfloor > m + \frac{d}{2}$. Additionally, suppose that the RBF $\phi$ satisfies the Fourier decay \eqref{Fourierdecay}. Then, for $f \in \mathrm{WP}_2^s(\Omega)$, we obtain for all $h \leq h_0$ the error estimate
\[ \|f - V_f\|_{\mathrm{WP}_q^{m}(\Omega)} \leq C h^{s-m-d(1/2-1/q)_+} \|f\|_{\mathrm{WP}_2^{s}(\Omega)}.\]
The constant $C > 0$ is independent of $h$.
\end{theorem}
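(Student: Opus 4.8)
The plan is to reduce the global estimate to the regional Sobolev sampling inequality of Proposition \ref{prop:lenin} applied with $p=2$, and then to control the resulting regional Sobolev norms of the interpolation error by the native space norm of $f$ via the characterization in Theorem \ref{thm:char}. First I would introduce the error function $u := f - V_f$. Since $V_f$ interpolates $f$ on all nodes $\mathcal{X}$, the function $u$ vanishes on each $\mathcal{X} \cap \Omega_i$. Moreover $V_f \in H_{K_\psi}(\Omega) \subset \mathcal{N}_{K_\psi}(\Omega)$, so by Theorem \ref{thm:char} both $f$ and $V_f$, and hence $u$, belong to $\mathrm{WP}_2^s(\Omega)$; in particular $u_{\Omega_i} \in \mathrm{W}_2^s(\Omega_i)$ for every $i$. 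The hypothesis $\lfloor s \rfloor > m + d/2$ is exactly the admissibility condition of Proposition \ref{prop:lenin} with $p = 2$, so for each subregion $\Omega_i$ and all $h_i \leq h_0$ I obtain
\[ \|u\|_{\mathrm{W}_q^m(\Omega_i)} \leq C_i\, h_i^{\gamma}\, \|u\|_{\mathrm{W}_2^s(\Omega_i)}, \qquad \gamma := s - m - d(1/2 - 1/q)_+. \]

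Next I would verify that the exponent $\gamma$ is strictly positive: if $q \geq 2$ then $(1/2 - 1/q)_+ = 1/2 - 1/q \leq 1/2$, so $\gamma \geq s - m - d/2 > 0$, while if $q < 2$ then $(1/2 - 1/q)_+ = 0$ and $\gamma = s - m > d/2 > 0$; in both cases the positivity follows from $s \geq \lfloor s \rfloor > m + d/2$. Consequently $h_i \leq h$ gives $h_i^{\gamma} \leq h^{\gamma}$, and the regional bound becomes $\|u\|_{\mathrm{W}_q^m(\Omega_i)} \leq C_i h^{\gamma} \|u\|_{\mathrm{W}_2^s(\Omega_i)}$. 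Summing the $q$-th powers over the finitely many subregions (taking the maximum if $q = \infty$) and using that all $\ell^p$-norms on $\R^n$ are equivalent, I arrive at
\[ \|u\|_{\mathrm{WP}_q^m(\Omega)} \leq C'\, h^{\gamma}\, \|u\|_{\mathrm{WP}_2^s(\Omega)}, \]
where $C'$ depends only on $n$, $q$ and $\max_i C_i$.

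Finally, the decisive step is to replace $\|u\|_{\mathrm{WP}_2^s(\Omega)}$ by $\|f\|_{\mathrm{WP}_2^s(\Omega)}$. Because $V_f$ is the kernel interpolant, it coincides with the orthogonal projection of $f$ onto $\mathrm{span}\{K_\psi(\cdot,\bs{x}_k)\}$ in the native space inner product, so the Pythagorean identity yields $\|u\|_{\mathcal{N}_{K_\psi}(\Omega)} \leq \|f\|_{\mathcal{N}_{K_\psi}(\Omega)}$. Combining this with the two-sided norm equivalence $\|\cdot\|_{\mathrm{WP}_2^s(\Omega)} \simeq \|\cdot\|_{\mathcal{N}_{K_\psi}(\Omega)}$ supplied by Theorem \ref{thm:char} gives $\|u\|_{\mathrm{WP}_2^s(\Omega)} \leq C'' \|f\|_{\mathrm{WP}_2^s(\Omega)}$, and setting $C = C' C''$ closes the estimate. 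I expect the main obstacle to lie precisely in this last step: the optimality of $V_f$ is an assertion about the abstract native space norm, and it transfers to the computable piecewise Sobolev norm only through the characterization of Theorem \ref{thm:char}. Indeed, it is exactly this characterization that makes the $\mathrm{W}_2^s$-smoothness of the error available, so that the sampling inequality can extract the full power $h^{\gamma}$; without it the error would have no controllable regional Sobolev norm to feed into Proposition \ref{prop:lenin}.
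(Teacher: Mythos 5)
Your proposal is correct and follows essentially the same route as the paper's proof: regional application of Proposition \ref{prop:lenin} with $p=2$ to $u=f-V_f$, synthesis over the finitely many subregions, and then the projection property of the interpolant in $\mathcal{N}_{K_\psi}(\Omega)$ combined with the norm equivalence from Theorem \ref{thm:char}. Your added checks (positivity of the exponent $\gamma$ and the $\ell^q$-versus-$\ell^2$ equivalence needed when passing from the regional $\mathrm{W}_2^s(\Omega_i)$ norms to $\|u\|_{\mathrm{WP}_2^s(\Omega)}$) are details the paper glosses over, but they do not change the argument.
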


\begin{proof}
By assumption, the function $f$ is an element of $\mathrm{WP}_2^s(\Omega)$. Further, the native space characterization in Theorem \ref{thm:char} guarantees that also the VSDK interpolant $V_f$ is an element of $\mathrm{WP}_2^s(\Omega)$.
Therefore, we can apply Proposition \ref{prop:lenin} with $p=2$ to every subset $\Omega_i$ and obtain
\[ \| f - V_f\|_{\mathrm{W}_q^{m}(\Omega_i)} \leq C_i h_i^{s-m-d(1/2-1/q)_+} \|f - V_f\|_{\mathrm{W}_2^{s}(\Omega_i)}, \quad i \in \{1, \ldots, n\},\]
with $h_i \leq h_0$. Now, using the definition of the piecewise Sobolev space $\mathrm{WP}_q^m(\Omega)$, we can synthesize these estimates to obtain
\begin{equation} \label{eq:errorintermediate}
 \| f - V_f\|_{\mathrm{WP}_q^{m}(\Omega)} \leq C h^{s-m-d(1/2-1/q)_+} \|f - V_f\|_{\mathrm{WP}_2^{s}(\Omega)},
\end{equation}
where $C = \max_{1 \leq i \leq n} C_i$ and $h = \max_{1 \leq i \leq n} h_i$. Since $\mathrm{WP}_2^{s}(\Omega)$ is equivalent to the native space ${\cal N}_{K_{\psi}}(\Omega)$ we can use the fact
that the interpolant $V_f$ is a projection into a subspace of ${\cal N}_{K_{\psi}}(\Omega)$.
This helps us to finalize our bound:
\[\|f - V_f\|_{\mathrm{WP}_2^{s}} \leq C' \|f - V_f\|_{{\cal N}_{K_{\psi}}(\Omega)}
\leq C' \|f\|_{{\cal N}_{K_{\psi}}(\Omega)} \leq C'' \|f\|_{\mathrm{WP}_2^{s}(\Omega)}, \]
with two constants $C'$, $C''$ describing the upper and lower bound for the equivalence of
the two Hilbert space norms.
\end{proof}

\begin{remark} The error estimates in Theorem \ref{thm:errorestimate} provide a theoretical explanation why VSDK interpolation  is superior to RBF interpolation in the spaces $\mathrm{WP}_2^s(\Omega)$. In these spaces the convergence of
the interpolant $V_f$ towards $f \in \mathrm{WP}_2^s(\Omega)$ depends only on the smoothness
$s$ of $f$ in the interior of the subsets $\Omega_i \subset \Omega$ and not on
the discontinuities at the boundaries of $\Omega_i$. If $s$ is sufficiently large, the corresponding fast convergence of the interpolation scheme prevents the emergence of Gibbs artifacts in the interpolant $V_f$. 
\end{remark}

\section{Numerical experiments} \label{sec3}

\subsection{Experimental setup} \label{sec31}
In our main application in magnetic particle imaging we will use samples along Lissajous trajectories as interpolation nodes. For this, we will introduce and use these node sets already for the numerical experiments in this section. As test images we consider the Shepp-Logan phantom and an additional simple geometric phantom. We give a brief description of this experimental setup.

\begin{figure}[htbp]
    \centering
	\includegraphics[width= 1\textwidth]{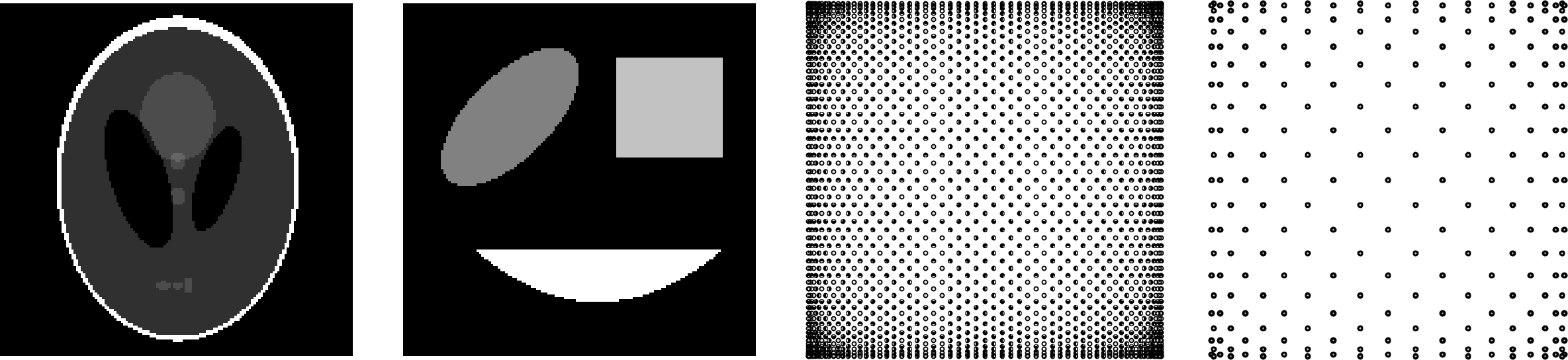}
	\caption{The Shepp-Logan phantom (left), a geometric phantom (middle, left), as well as the   Lissajous nodes $\bs{\mathrm{LS}}_{2}^{(32,33)}$ (middle, right) and
	$\bs{\mathrm{LS}}_{2}^{(10,11)}$ (right).}
	\label{fig:1}
\end{figure}

\subsubsection{Lissajous interpolation nodes}

For a vector $\bs{n}=(n_1,n_2)\in\N^2$ with relatively prime
frequencies $n_1$, $n_2$ and $\epsilon \in \{1,2\}$, the generating curves for the Lissajous nodes are given as
\begin{equation}\label{parallelismo}
\gamma_{\epsilon}^{(\bs{n})}(t)=
\left( \cos(n_2t), \, \cos \left( \ts n_1 t - \frac{\epsilon-1}{2n_2} \pi \right) \right).
\end{equation}
The Lissajous curve $\gamma_{\epsilon}^{(\bs{n})}$ is $2 \pi$-periodic and contained in the
square $[-1,1]^2$. If $\epsilon=1$, this curve is degenerate, i.e. it is traversed twice in one period. Further, $\gamma_{1}^{(\bs{n})}$ with $\bs{n} = (n,n+1)$, $n \in \N$, are
the generating curves of the Padua points \cite{bos,Erb2016}.
If $\epsilon=2$, then the curve is non-degenerate. If $n_1 + n_2$ is odd, the curve $\gamma_{2}^{(\bs{n})}(t)$ in
\eqref{parallelismo} can be further simplified in terms of two sine functions and gives a typical sampling trajectory encountered in magnetic particle imaging, see \cite{erb,EKDA2015,knopp,knopp_lissa}.
Using $\gamma_{\epsilon}^{(\bs{n})}$ as generating curves, we introduce the Lissajous nodes as the sampling points
\begin{equation} \label{eq:LSpoints}
\bs{\mathrm{LS}}_{\epsilon}^{(\bs{n})} = \left\{\gamma_{\epsilon}^{\bs{n}}\left(\frac{\pi k}{\epsilon n_1n_2}\right), \quad k=0,...,2\epsilon n_1n_2-1 \right\}.
\end{equation}
In our upcoming tests, we will use the points $\bs{\mathrm{LS}}_{2}^{(\bs{n})}$, with $n_1$, $n_2$ relatively prime and $n_1 + n_2$ odd
as underlying interpolation nodes. These node sets were already used in \cite{Marchetti,EKDA2015,KEASKB2016} for applications in MPI. The number of points is given by $\# \bs{\mathrm{LS}}_{2}^{(\bs{n})} = 2 n_1 n_2 + n_1 + n_2$, see \cite{erb,EKDA2015}. The fill distance $$h_{ \bs{ \mathrm{LS}}_{2}^{(\bs{n})}} =  \ds \max_{\bs{y} \in [\text{-}1,1]^2} \min_{  \bs{x} \in \bs{\mathrm{LS}}_{2}^{(\bs{n})}} \|\bs{x} - \bs{y}\|_2$$ for the nodes $\bs{\mathrm{LS}}_{2}^{(\bs{n})}$ in the square $[-1,1]^2$ can be computed as
\begin{equation}
 h_{ \bs{ \mathrm{LS}}_{2}^{(\bs{n})}} =  \frac{1}{2} \max \! \left\{ \!\! \sqrt{ \! S_{n_1}^2\!\! + \!\! \ts \left( \frac{S_{2n_1}^2\!\! + S_{2n_2}^2\!\! - S_{n_1} S_{2n_1}}{S_{2n_2}} \!\! \right)^2}, \! \sqrt{ \! S_{n_2}^2\!\! + \!\! \ts \left( \frac{S_{2n_1}^2\!\! + S_{2n_2}^2\!\! - S_{n_2} S_{2n_2}}{S_{2n_1}} \!\! \right)^2}  \right\},\label{eq:filldistanceLS} \end{equation}
by using the shortcut $S_n = \sin ( \pi / n )$.
This allows us to express the fill distance of $\bs{\mathrm{LS}}_{2}^{(\bs{n})}$ directly
in terms of the frequency parameters $n_1$ and $n_2$. Further, we have the estimates
\[\frac{1}{2} \max\left\{ S_{n_1}, S_{n_2}\right\} \leq h_{\bs{\mathrm{LS}}_{2}^{(\bs{n})}} \leq  \max\left\{ S_{2n_1}, S_{2n_2}\right\} \leq \max\left\{ \frac{\pi}{2n_1}, \frac{\pi}{2n_2}\right\}.\]
For $\bs{n} = (32,33)$ and $\bs{n} = (10,11)$, the nodes $\bs{\mathrm{LS}}_{2}^{(\bs{n} )}$ are illustrated in
Figure \ref{fig:1} (right).

\subsubsection{Shepp-Logan phantom}
As a main test phantom with sharp edges, we use the Shepp-Logan phantom
$f_{\mathrm{SL}}$ on $\Omega = [-1,1]^2$ as introduced in \cite{SheppLogan}. The function $f_{\mathrm{SL}}: [-1,1]^2 \to [0,1]$ is defined as a composition of $10$ step functions determined by
elliptic equations. A discretization of $f_{\mathrm{SL}}$ on an equidistant $M \times M$ grid, $M = 150$, is displayed in Figure \ref{fig:1} (left).
\subsubsection{Geometric phantom} As a second phantom we use a geometric composition $f_{\mathrm{G}}$ of an ellipse $\mathrm{E}$, a rectangle $\mathrm{R}$ and a bounded parabola $\mathrm{P}$, discretized on a $M \times M$ grid of size $M = 150$. The function $f_{\mathrm{G}}$ on $\Omega = [-1,1]^2$ is given as
$f_{\mathrm{G}} = \chi_{\mathrm{E}} + 1.5 \chi_{\mathrm{R}} + 2 \chi_{\mathrm{P}}$, where $\chi_{\mathrm{E}}$, $\chi_{\mathrm{R}}$ and $\chi_{\mathrm{P}}$ denote the characteristic functions of $\mathrm{E}$, $\mathrm{R}$ and $\mathrm{P}$, respectively. The phantom $f_{\mathrm{G}}$ is illustrated in Figure \ref{fig:1} (middle, left).

\subsubsection{Kernels}
For RBF interpolation in $\R^d$, as well as for the VSDK interpolation scheme which requires a kernel in $\R^{d+1}$, we use the following RBFs (cf. \cite{Fasshauer15}):

\begin{enumerate}[topsep=2pt, partopsep=2pt,itemsep=2pt,parsep=2pt,leftmargin=3em,labelwidth=*,align=right,label=(\roman*)]
\item The C$^0$-Mat{\'e}rn function $\phi_{\mathrm{Mat},0}(r) = e^{-r}$. The native space of the corresponding kernel is exactly the Sobolev space $\mathrm{W}_2^s(\R^d)$ with $s = \frac{d+1}{2}$.
We have $\mathrm{W}_2^{\frac{d+1}{2}}(\R^d) \subset C^0(\R^d)$.
\item The C$^2$-Mat{\'e}rn function $\phi_{\mathrm{Mat},2}(r) = (1+r)e^{-r}$. The native space of $\phi_{\mathrm{Mat},2}(\|\bs{x}\|_2)$ is the Sobolev space $\mathrm{W}_2^s(\R^d)$ with $s = \frac{d+3}{2}$. Functions in $\mathrm{W}_2^{\frac{d+3}{2}}(\R^d)$ are contained in $C^2(\R^d)$.
\item The C$^4$-Mat{\'e}rn function $\phi_{\mathrm{Mat},4}(r) = (3+3r+r^2)e^{-r}$. The radial function $\phi_{\mathrm{Mat},4}(\|\bs{x}\|_2)$ generates the Sobolev space $\mathrm{W}_2^s(\R^d)$ with $s = \frac{d+5}{2}$. $\mathrm{W}_2^{\frac{d+5}{2}}(\R^d)$ is contained in $C^4(\R^d)$.
\item The Gauss function $\phi_{\mathrm{Gauss}}(r) = e^{-r^2}$. This is an analytic function. The native space for the Gauss kernel is contained in every Sobolev space $\mathrm{W}_2^s(\R^d)$, $s \geq 0$.
\end{enumerate}

With decreasing separation distance of the interpolation nodes, the calculation of the coefficients in \eqref{eq:coeffVSDK} can be badly conditioned when solving the linear system  directly. This is particularly the case when using the Gaussian as underlying kernel. In order to stabilize the calculation, we regularized the system \eqref{eq:coeffVSDK} by adding a small multiple $\lambda>0$ of the identity to the interpolation matrix (we chose $\lambda = 10^{-12}$ in our calculations). Note however that in the literature there exist more sophisticated ways to avoid this bad conditioning, see for instance \cite[Chapters 11,12 \& 13]{Fasshauer15}.

\subsection{Experiment 1 - Convergence for a priori known discontinuities}

\begin{figure}[htbp]
	\centering
	\minipage{0.5\textwidth}
	\centering
	\includegraphics[width= 1\textwidth]{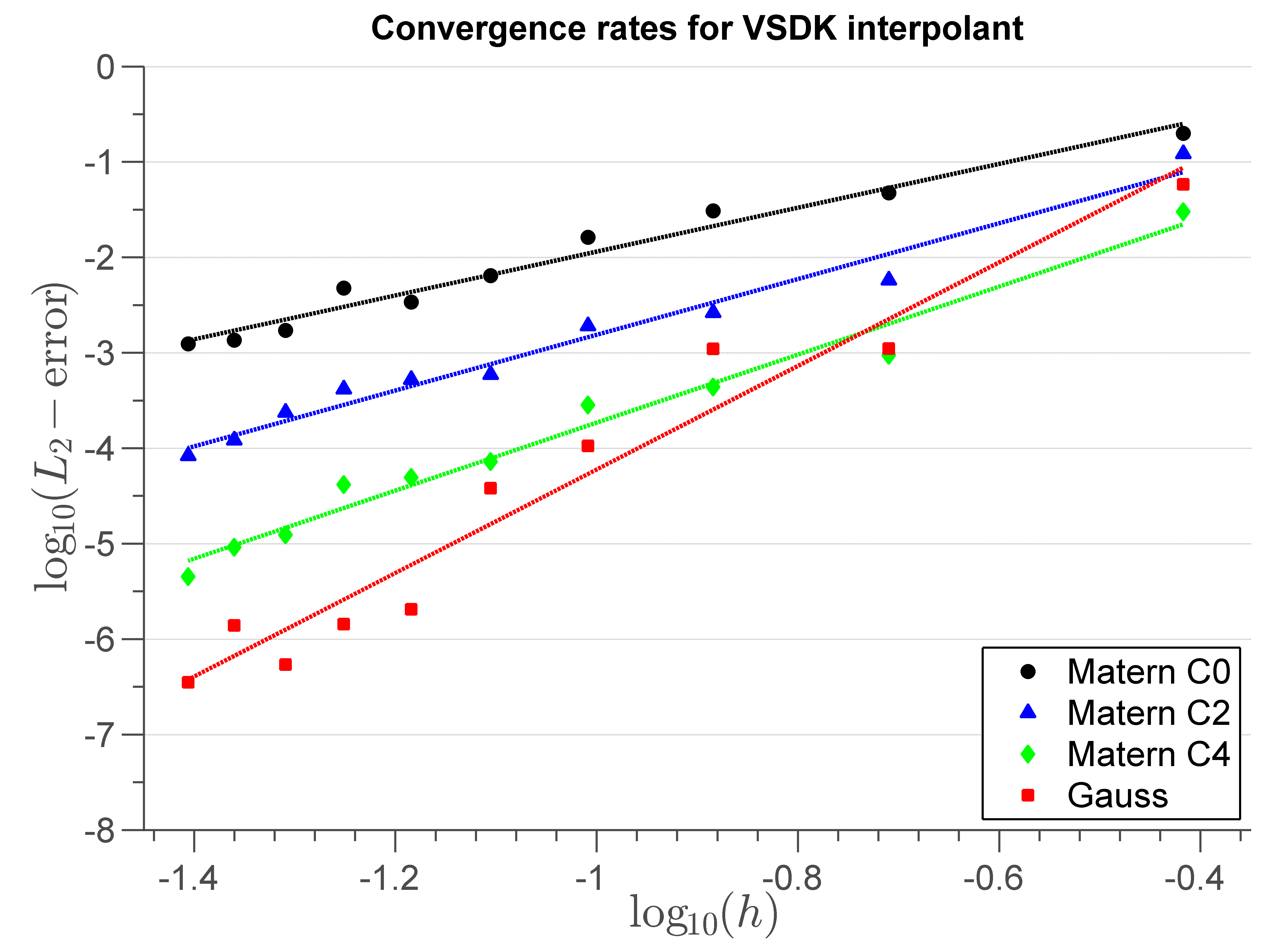}
	\endminipage\hfill
	\minipage{0.5\textwidth}
	\centering
	\includegraphics[width= 1\textwidth]{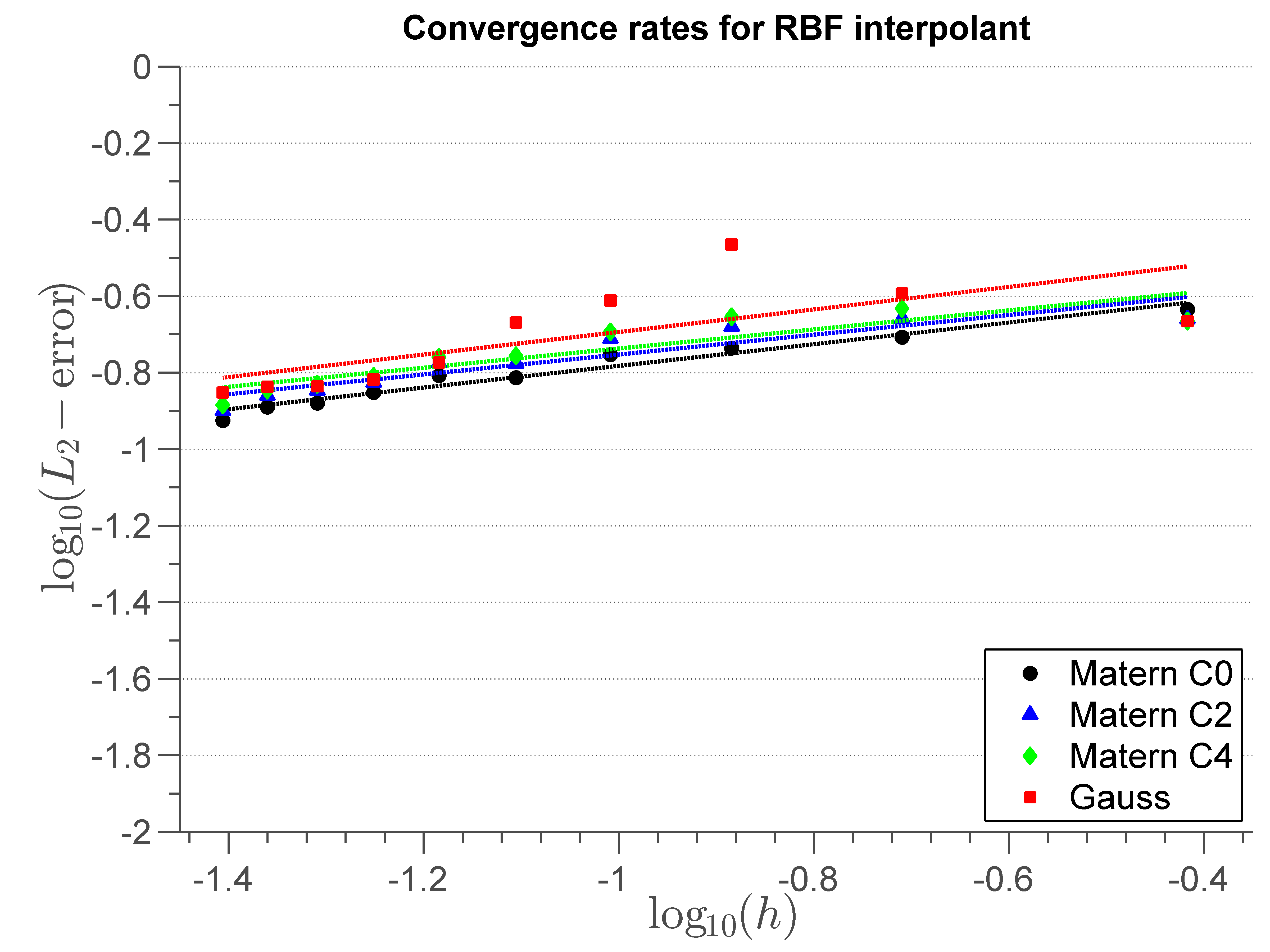}
	\endminipage\hfill
	\caption{Convergence rates for interpolating the Shepp-Logan phantom on the nodes $\bs{\mathrm{LS}}_2^{(\bs{n})}$ using VSDK schemes with a priori known
	scaling function $\psi$ (left) and RBF schemes (right). } \label{fig:2}
\end{figure}

\subsubsection{Description} For $n \in \{4,8,12, \ldots,40\}$ we interpolate the Shepp-Logan phantom
at the Lissajous nodes $\bs{\mathrm{LS}}_2^{(n,n+1)}$. We use the four kernels introduced in the previous Section \ref{sec31} and compute the RBF as well as the VSDK interpolant for all sets of Lissajous nodes. In a $\log$-$\log$ diagram we plot the fill distance $h = h_{ \bs{ \mathrm{LS}}_{2}^{(\bs{n})}}$ given in \eqref{eq:filldistanceLS} against the $L_2$-error between the original Shepp-Logan function $f_{\mathrm{SL}}$ and the interpolant. As an approximation of the continuous $L_2$-error we use the root-mean-square error on the finite
discretization grid. As a scaling function for the VSDK interpolant, we use $\psi(\bs{x}) = 0.5 f_{\mathrm{SL}}(\bs{x})$, i.e. we use a scaling function with the correct a priori information of the discontinuities. The two $\log$-$\log$ diagrams are displayed in Figure \ref{fig:2}. The slopes for the regression lines are listed in Table \ref{tab:1}. The RBF and the VSDK reconstruction
for the interpolation nodes $\bs{\mathrm{LS}}_2^{(40,41)}$ using the $C^2$-Mat{\'e}rn kernel are shown in Figure \ref{fig:3}.

\subsubsection{Results and discussion} This first numerical experiment confirms the theoretical error estimates given in Theorem \ref{thm:errorestimate} and shows that, if the discontinuities of a function are a priori known, the interpolation model based on the discontinuous kernels is significantly better than RBF interpolation.

The slope of the regression line for the RBF interpolation is for all four kernels between $0.25$ and $0.31$. This is in line with the low order smoothness of the Shepp-Logan phantom $f_{\mathrm{LS}}$ which is contained in the Sobolev space $\mathrm{W}_2^{s}([-1,1]^2)$ only for $s < 1/2$. On the other hand, with the choice $\psi = 0.5 f_{\mathrm{LS}}$, the Shepp-Logan phantom is contained in
all piecewise Sobolev spaces $\mathrm{WP}_{2}^s([-1,1]^2)$, $s \geq 0$. Thus,
as predicted in Theorem \ref{thm:errorestimate}, the convergence rates are determined by the smoothness $s$ of the applied kernel. In particular, we obtain the diversified and faster convergence displayed in Figure \ref{fig:2} (left) and Table \ref{tab:1}. Note that for a better comparison between the two interpolation schemes we used for both the global
fill distance $h_{ \bs{ \mathrm{LS}}_{2}^{(\bs{n})}}$ given in \eqref{eq:filldistanceLS}, whereas in Theorem \ref{thm:errorestimate} the fill distance $h$ depends on the segmentation of the domain. In general, we have $h_{ \bs{ \mathrm{LS}}_{2}^{(\bs{n})}} \leq h$.

\begin{table}
\centering
\begin{tabular}{| l | l | l | l |}
  \hline			
  Kernel & Smoothness $s$ & Slope VSDK convergence& Slope RBF convergence \\
  \hline
  $C^0$-Mat{\'e}rn & 1.5 & 2.3609 & 0.29117 \\
  $C^2$-Mat{\'e}rn & 2.5 & 2.9918 & 0.26692 \\
  $C^4$-Mat{\'e}rn & 3.5 & 3.6521 & 0.25791 \\
  Gauss & analytic & 5.5690 & 0.30625 \\
  \hline
\end{tabular}
\caption{The slopes of the convergence rates in Figure \ref{fig:3} for VSDK and RBF interpolation.}
\label{tab:1}
\end{table}

\begin{figure}[htbp]
	\centering
	\minipage{0.5\textwidth}
	\centering
	\includegraphics[width= 1\textwidth]{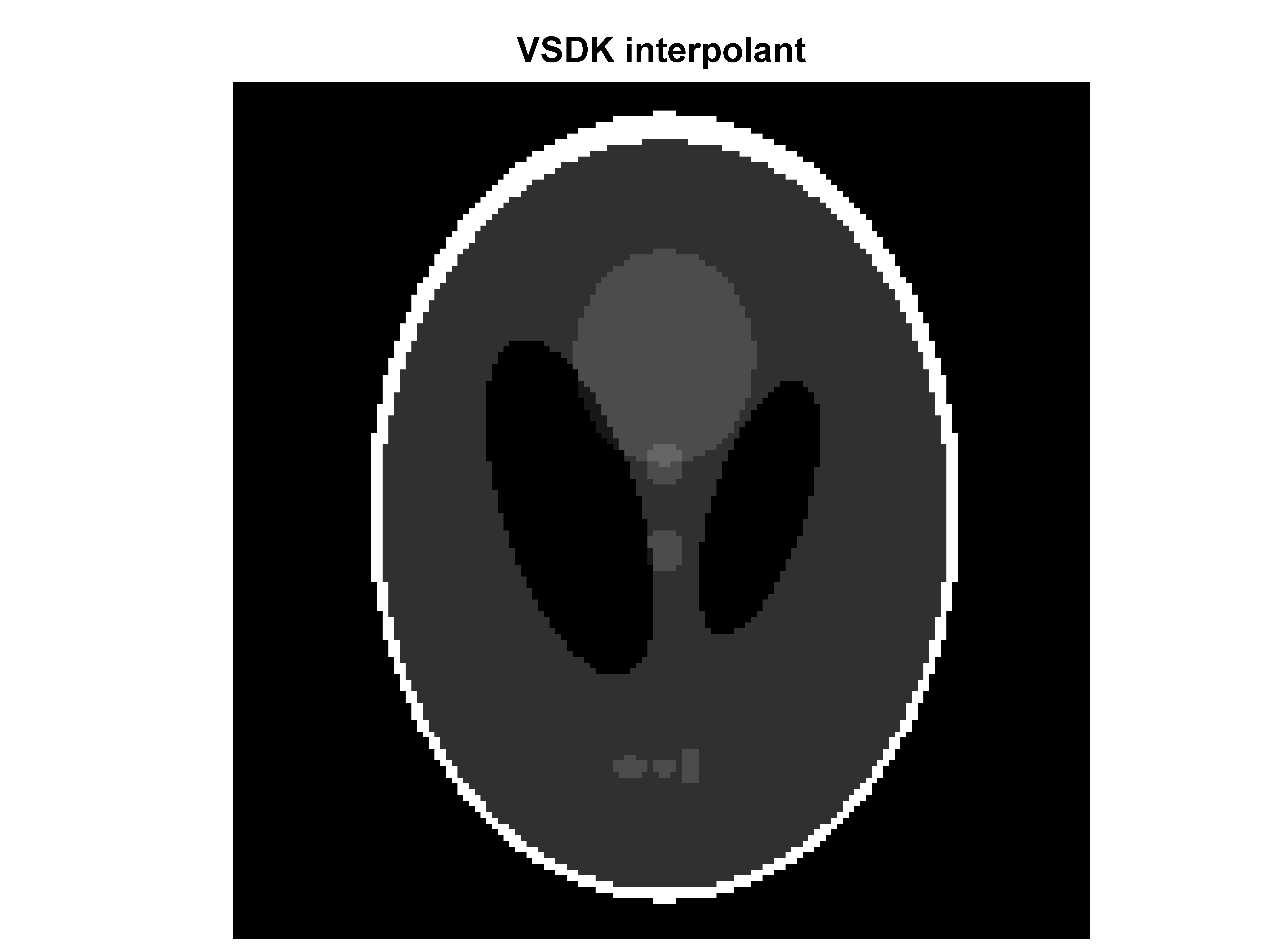}
	\endminipage\hfill
	\minipage{0.5\textwidth}
	\centering
	\includegraphics[width= 1\textwidth]{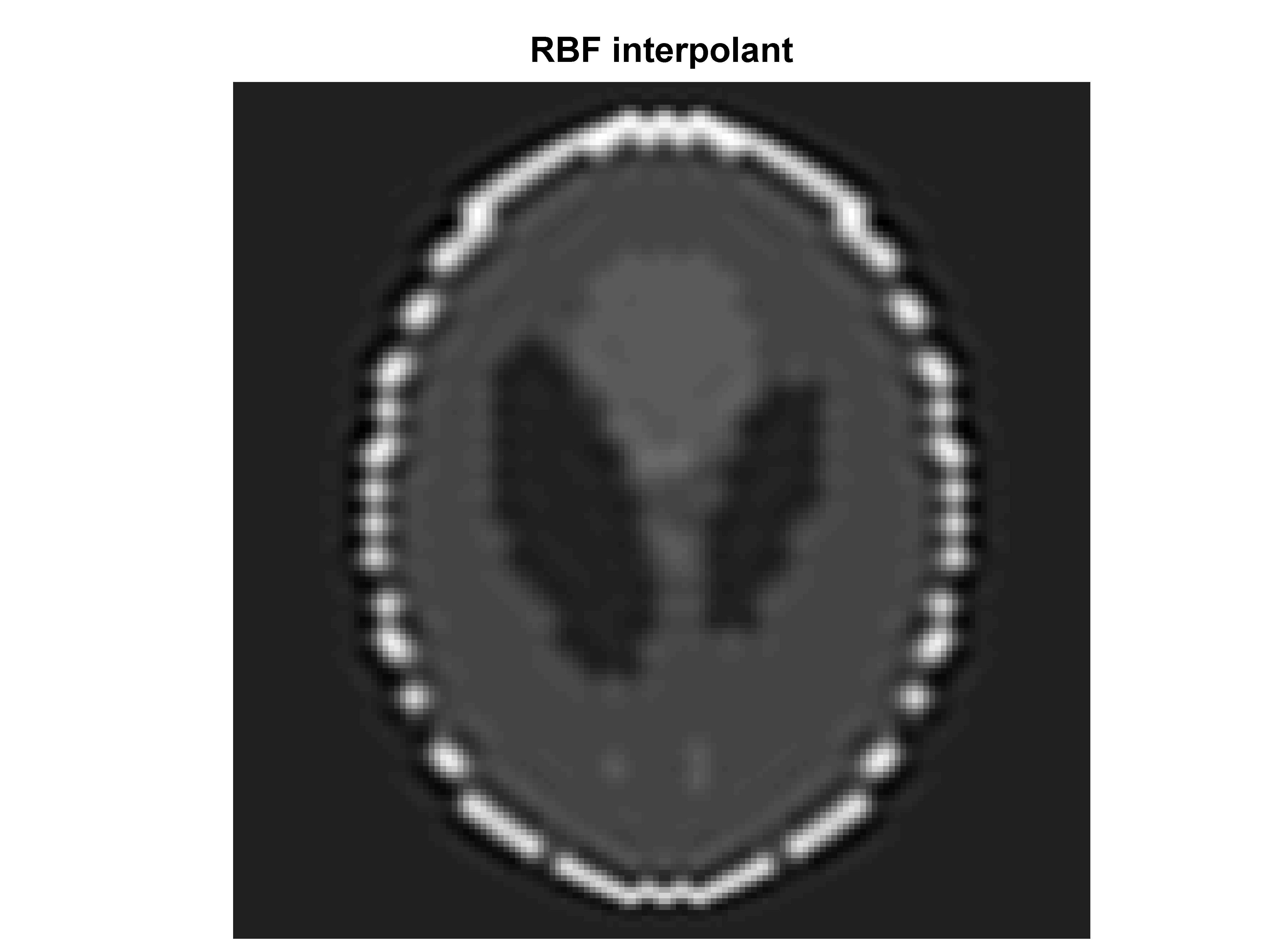}
	\endminipage\hfill
	\caption{Reconstruction of the Shepp-Logan phantom for given sampling data at the nodes $\bs{\mathrm{LS}}_{2}^{(40,41)}$ using the VSDK scheme (left) and the RBF interpolation (right). For both schemes, the C$^2$-Mat{\'ern} kernel is used. In the VSDK scheme, the scaling function $\psi$ is a priori known.} \label{fig:3}
\end{figure}

\begin{figure}[htbp]
	\centering
	\centering
	\includegraphics[width= 1\textwidth]{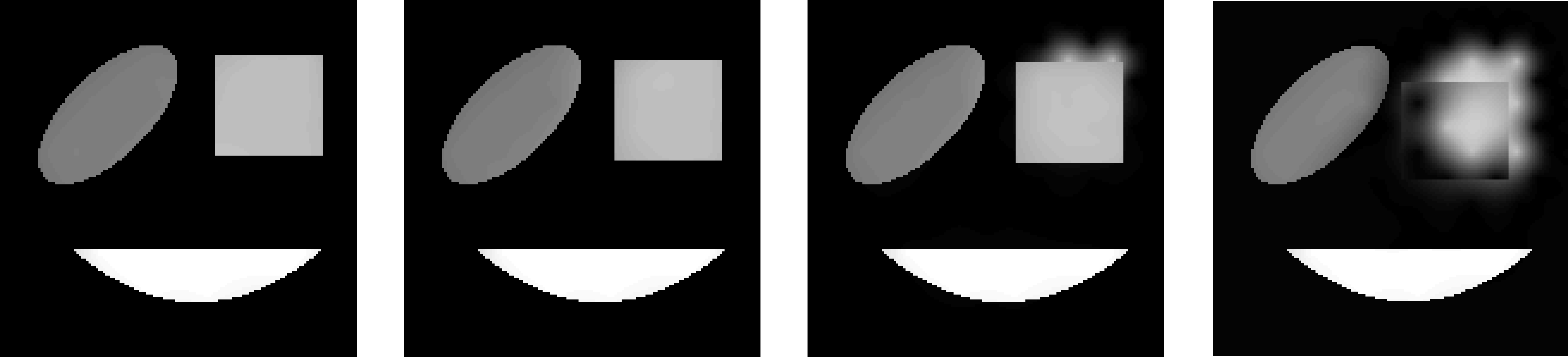}
	\caption{Reconstruction of the geometric phantom by the VSDK scheme with differing scaling functions. The set of interpolation nodes is $\bs{\mathrm{LS}}_{2}^{(10,11)}$. }
	\label{fig:4}
\end{figure}

\begin{figure}[htbp]
	\centering
	\includegraphics[width= 1\textwidth]{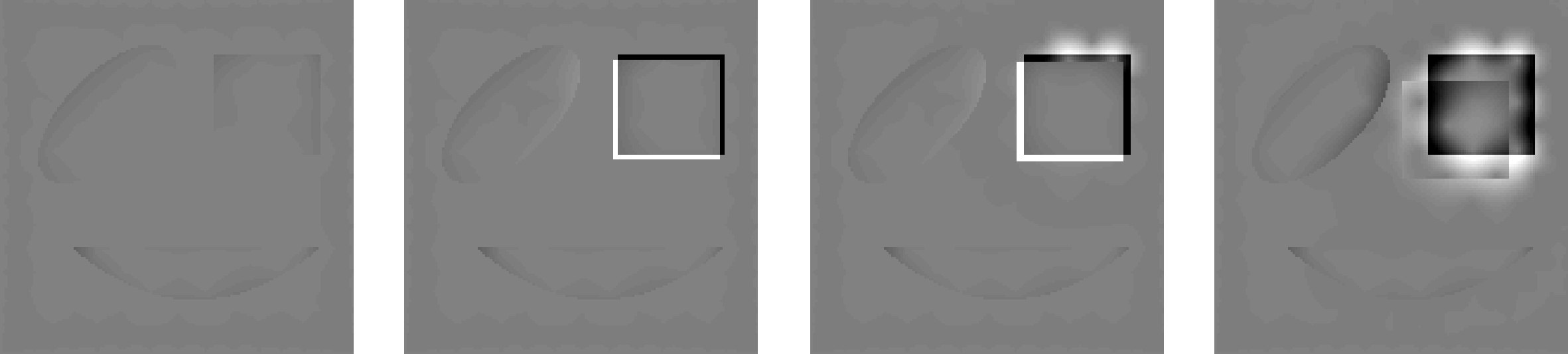}
	\caption{The differences of the VSDK interpolants in Figure \ref{fig:4} to the original phantom.}
	\label{fig:5}
\end{figure}

\newpage

\subsection{Experiment 2 - Perturbations of the scaling function} \label{sec:shiftmask}

\subsubsection{Description} In the second computational experiment, we test the sensitivity of the VSDK interpolation with respect to
shifts of the scaling function $\psi$. For this experiment, we consider the geometric phantom
$f_{\mathrm{G}}$ and interpolate it with differing scaling functions at the Lissajous nodes
$\bs{\mathrm{LS}}_{2}^{(10,11)}$ using the $C^0$-Mat{\'e}rn kernel. The corresponding reconstructions are displayed in Figure \ref{fig:4}. Starting from the correct scaling function (left), i.e. using $\psi = f_{\mathrm{G}}$, the rectangle in the scaling function $\psi$ is slowly shifted towards the center (in Figure \ref{fig:4}, from left to right). The corresponding interpolation errors with respect to the original function $f_{\mathrm{G}}$ are shown in Figure \ref{fig:5}.

\subsubsection{Results and discussion} The outcome of the variably scaled kernel interpolation depends sensitively on the choice of the scaling function $\psi$. All reconstructions in Figure \ref{fig:4} interpolate the function values on the Lissajous nodes. If the scaling function is correctly chosen (left) or only slightly shifted (middle, left), no artifacts are visible in the interpolation. However, the larger the shift of the rectangle in the scaling function $\psi$ gets, the stronger the artifacts are. In particular we see that if the values of $\psi$ do not correspond to the data values on the interpolation nodes, Gibbs type artifacts appear. Therefore, if the VSDK interpolation scheme is applied in a setting in which the edges are not known, a robust edge estimator is needed. In the next section, we will discuss some possibilities for such an estimator.

\section{Extracting edges from the given data} \label{sec4}

We use algorithms from machine learning to obtain a segmentation of the domain $\Omega$.
In particular, we focus on the so-called Support Vector Machines (SVMs) and refer to \cite{Scholkopf,Smola} for a general overview. The main reason to use kernel machines for segmentation is that they can be applied directly to scattered data. Note however that the literature on segmentation of images and edge detection is very extensive and gives a lot of further interesting possibilities to obtain a segmentation. We refer to \cite{SolomonBreckon} for a general introduction.

\subsection{Segmentation of an image by classification algorithms} \label{sec:kernelmachine}

In order to obtain a classification of the entire domain $\Omega$, we separate the data values $(\bs{x}_i,f_i)$ into $n$ classes $\mathcal{S}_1, \ldots, \mathcal{S}_n$ such that all nodes $\bs{x}_i$ in one class $\mathcal{S}_j$
are precisely contained in $\Omega_j$, $j \in \{1, \ldots, n\}$. We link every class $\mathcal{S}_j$ to a value $\alpha_j \in \R$ and set the label $z_i = \psi(\boldsymbol{x}_i) = \alpha_j $ if
$(\bs{x}_i,f_i)$ is contained in $\mathcal{S}_j$. From the labels $\mathcal{Z} = \{z_1, \ldots, z_N\}$ of the points in ${\cal X}$, we want to derive now a classification for every $\bs{x} \in \Omega$.

We give a short description of SVM classification, a more precise introduction can be found, for instance, in \cite{Scholkopf,Smola}. To simplify the considerations, we assume that we only have two classes with possible label values $\alpha_1 = -1$ and $\alpha_2 = 1$.
In this case, a decision function $z$ that allows us to assign to every $\bs{x}$ an appropriate label is given by
$$z(\boldsymbol{x}) = \textrm{sign}(h( \boldsymbol{x})),$$
where $h = 0$ describes a hyperplane separating the given measurements. The hyperplane is set up
with help of the so called kernel trick. By virtue of Mercer's theorem \cite{Mercer}, any kernel $K$ can be decomposed as
	\begin{equation} \label{eq5}
	K(\boldsymbol{x}, \boldsymbol{y}) = \bs{\Theta}(\bs{x})^{\intercal} \bs{\Theta}(\bs{y})
	 =  \sum_{j=1}^{\infty} \Theta_j(\boldsymbol{x}) \Theta_j(\boldsymbol{y}),\quad \boldsymbol{x}, \boldsymbol{y}\in\Omega,
	\end{equation}
	where $\Theta_j$ are eigenfunctions of the integral operator $g \to \int_{\Omega} K(\boldsymbol{x},\boldsymbol{y})  g(\boldsymbol{y})  d\boldsymbol{y}$.
The kernel trick consists in mapping the points $\bs{x}_i$ via $\bs{\Theta}$ into
a (possible) infinite dimensional Hilbert space and to describe the separating hyperplane as
\begin{equation*}
h(\boldsymbol{x})= \bs{\Theta}(\boldsymbol{x})^{\intercal} \boldsymbol{w}+b.
\end{equation*}
The weight $\boldsymbol{w}$, i.e. the unit normal vector to the hyperplane, and the bias $b$ can  be determined by maximizing the gap to both sides of this hyperplane.
One standard way to obtain this hyperplane is by solving the optimization problem
\begin{equation*}
\max_{\bs{\beta}} \left( \sum_{k=1}^{N} \beta_k - \dfrac{1}{2} \sum_{k=1}^{N} \sum_{i=1}^{N} \beta_k \beta_i z_{k} z_{i} K(\boldsymbol{x}_k, \boldsymbol{x}_i) \right),
\end{equation*}
subject to the constraints
\begin{equation*}
\left\{
\begin{array}{l}
\sum_{k=1}^{N} \beta_k z_{k} = 0,\\
0 \leq \beta_i \leq C,  \ i \in \{1, \ldots, N\}.
\end{array}\right.
\end{equation*}
Here, the box constraint $C$ is simply a regularization parameter \cite{Fasshauer15}. Based on the maximizer of this problem, the decision function $z$ of the SVM classifier is then given as
\begin{equation} \label{eq:decisionfunction}
z(\boldsymbol{x})= \textrm{sign}( h(\boldsymbol{x})) = \textrm{sign} \left( \sum_{i=1}^{N} \beta_i z_{i} K(\boldsymbol{x},\boldsymbol{x}_i)+b \right).
\end{equation}
The bias $b$ can be determined as
\begin{equation*}
b = \sum_{k=1}^{N} \beta_k z_{k} K(\boldsymbol{x}_k,\boldsymbol{x}_j),
\end{equation*}
where $j$ denotes the index of a coefficient $\beta_j$ which is strictly between $0$ and $C$.

The classification function $z(\bs{x})$ in \eqref{eq:decisionfunction} gives now the desired segmentation of $\Omega$: the two sets $\Omega_1$ and $\Omega_2$ are defined such that for $\bs{x} \in \Omega_i$ we have $z(\bs{x}) = \alpha_i$, $i \in \{1,2\}$. The corresponding discontinuous scaling function $\psi$ on $\Omega$ is given as
\begin{equation}
\psi(\boldsymbol{x}) = \left\{
\begin{array}{ll}
\alpha_1, & \quad \textrm{if} \hskip 0.2cm z(\boldsymbol{x}) = \alpha_1,\\
\alpha_2, & \quad \textrm{if} \hskip 0.2cm z(\boldsymbol{x}) = \alpha_2,\\
\end{array}\right.
\label{map_v}
\end{equation}
It is straightforward to extend this classification scheme if $\alpha_{1}, \alpha_2 \neq \pm 1$.
There are also several strategies to extend this scheme to the case that the number of classes is $n \geq 2$. In the literature, this is known as Multiclass SVM classification.
One usual approach here is to divide the single multiclass problem into multiple binary
SVM classification problems.

\subsection{Strategies to set the labels for classification} \label{sec:labelstrategies}
In general, the choice of a labeling strategy depends on the aimed at application. In the following, we specify a few simple heuristic strategies to extract the labels $\mathcal{Z}$ from a given data set
$(\mathcal{X},\mathcal{F})$.

\subsubsection{Using thresholds on the data values} \label{sec:manualstrategy1}
If the function $f$ has discontinuities, these are visible as deviations in the data set $\mathcal{F}$. A very simple strategy is therefore to use thresholds for the definition of the labels. If $a_0 < a_1 < \cdots < a_n$ and $\mathrm{supp} (\mathcal{F})$ is contained in the interval $[a_0, a_n)$ we can define $n$ classes $\mathcal{S}_1, \ldots, \mathcal{S}_n$ by
assigning $(\bs{x}_i,f_i)$ to $\mathcal{S}_j$ if $a_{j-1} \leq f_i < a_j$, $j \in \{1, \ldots,n\}$.

\subsubsection{Using thresholds on interpolation coefficients} In \cite{romani}, it is shown that variations in the expansion coefficients of an RBF interpolation can be used to detect the edges of a function. Thus, in the same way as in the previous strategy, thresholds on the absolute value of the RBF coefficients can be applied to determine the aimed at labeling.

\subsubsection{Automated strategies using $\mathrm{k}$-means clustering} \label{sec:autostrategy}
The given data $(\mathcal{X},\mathcal{F})$ can also be segmented using an automated procedure using $\mathrm{k}$-means clustering. If the size $n$ of classes is known, this method provides $n$ pairwise disjoint classes $\mathcal{S}_1, \ldots, \mathcal{S}_n$ by minimizing the functional
\[\sum_{j=1}^n \sum_{(\bs{x},h) \in \mathcal{S}_j} |h - \bar{h}_j|. \]
The value $\bar{h}_j$ denotes the mean of all function values $h$ inside the class $\mathcal{S}_j$. Note that in this case the position $\bs{x}$ of the data is not used to determine the labels.

\subsection{Algorithm for VSDK interpolation with unknown edges}

In the following Algorithm \ref{alg:41}, we summarize the entire scheme for the computation of a shape-driven interpolant from given function values on a node set $\cal{X}$ and unknown discontinuities. For the interpolation, the VSDK scheme introduced in Section \ref{subsec:VSDK} is used. To estimate the edges of $f$, we use the segmentation and labeling procedures described in Section \ref{sec:kernelmachine} and Section \ref{sec:labelstrategies}.

\begin{algorithm}
\caption{Shape-driven interpolation with discontinuous kernels}
\label{alg:41}
 			\begin{tabular}{p{15cm}*{1}{c}}
 				\vskip 0.01 cm
 				{\fontfamily{pcr} \selectfont INPUTS:} Set of interpolation nodes \vskip 0.08 cm
 				\hskip 2.2 cm
 				${\cal X}=\{\boldsymbol{x}_i, \hskip 0.1cm i=1,\ldots,N\} \subseteq \Omega$, 						\vskip 0.08 cm
 				a corresponding set of data values
 				\vskip 0.08 cm
 				\hskip 2.2 cm ${\cal F}=\{f_i = f(\bs{x}_i),\hskip 0.1cm i=1,\ldots,N\}$,
 				\vskip 0.08 cm and the desired evaluation point(s) $\bs{x} \in \Omega$.
 				\vskip 0.12 cm
 				{\fontfamily{pcr} \selectfont OUTPUTS:} VSDK interpolant ${V}_f({\boldsymbol{x}})$, 
 				\vskip 0.24 cm
 				{\fontfamily{pcr} \selectfont Step 1:}
 				Extract the labels ${\cal Z}$ for ${\cal X}$ using ${\cal F}$ with a strategy of Section \ref{sec:labelstrategies}.
 				\vskip 0.12 cm
 				{\fontfamily{pcr} \selectfont Step 2:}
 				Train the kernel machine in Section \ref{sec:kernelmachine} with the points ${\cal X}$ and the labels  \vskip 0.08 cm
 				\hskip 2.0 cm ${\cal Z}$
 				to obtain a prediction \eqref{map_v} for the scaling function $\psi$.
 				\vskip 0.12 cm
 				{\fontfamily{pcr} \selectfont Step 3:} Calculate the coefficients $\boldsymbol{c}_i$, $i \in \{1, \ldots, N\}$, of the VSDK interpolation \vskip 0.08 cm
 				\hskip 2.0 cm
 				 by solving \eqref{eq:coeffVSDK}.
 				\vskip 0.12 cm
 				{\fontfamily{pcr} \selectfont Step 4:}
 				Evaluate the interpolant ${\cal V}_f({\boldsymbol{x}})$ in \eqref{eq3} at $\bs{x} \in \Omega$.
 				\\[\smallskipamount]
 			\end{tabular}
 \end{algorithm}

 \begin{figure}[htbp]
	\centering
	\includegraphics[width= 1\textwidth]{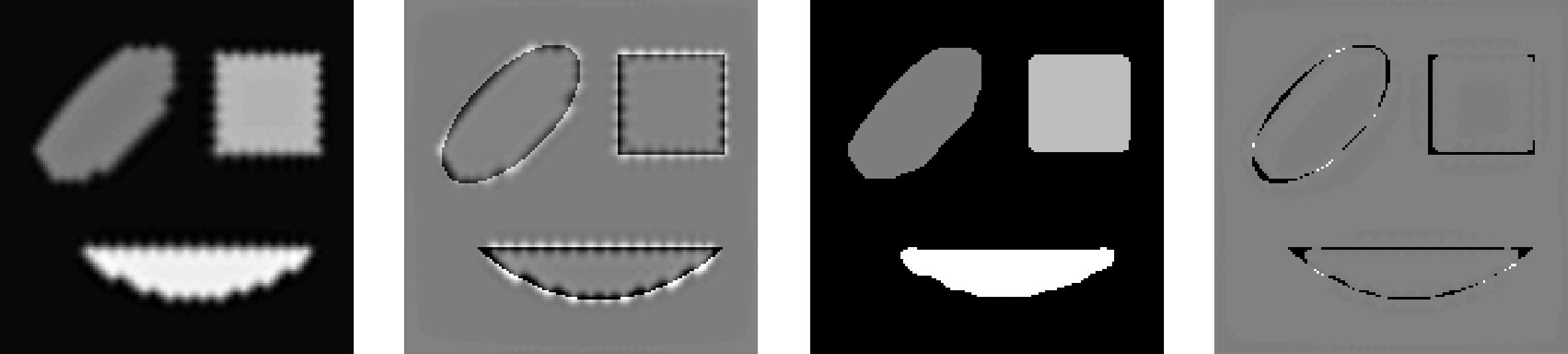}
	\caption{Comparison of RBF interpolation (left) with the VSDK scheme given in Algorithm \ref{alg:41} (right). The interpolation is performed on the nodes $\bs{\mathrm{LS}}_{2}^{(33,32)}$. In the second and the fourth image (from the left) the respective differences to the original phantom are displayed.}
	\label{fig:5b}
\end{figure}

\subsection{Numerical example}
\subsubsection{Description}
On the node set $\bs{\mathrm{LS}}_2^{(33,32)}$, we interpolate the geometric phantom $f_{\mathrm{G}}$ using an ordinary RBF interpolation and the VSDK scheme from Algorithm \ref{alg:41}. In both cases, we use the $C^0$-Mat{\'e}rn function as underlying kernel. The applied edge estimator in Algorithm \ref{alg:41} is based on the segmentation method of Section \ref{sec:kernelmachine} and the automated labeling described in Section \ref{sec:autostrategy}.
The resulting RBF interpolant and the error with respect to the original phantom are displayed in Figure \ref{fig:5b} (left). In Figure \ref{fig:5b} (right) the outcome of Algorithm \ref{alg:41} and the respective error with respect to $f_{\mathrm{G}}$ are shown.

\subsubsection{Results and discussion} In this example in which we don't use the a priori knowledge of the discontinuities, the VSDK scheme in combination with the edge estimator gives a higher reconstruction quality than an ordinary RBF interpolation. In particular, in the VSDK interpolation the Gibbs phenomenon is not visible and the errors are more localized at the boundaries of the geometric figures. For a more quantitative comparison, we compute the relative discrete $L_1$-errors of the two reconstructions. We obtain
\[ \frac{\|f_{\mathrm{G}} - P_{f_{\mathrm{G}}}\|_1}{\| f_{\mathrm{G}}\|_1} \approx 0.1647, \qquad
 \frac{\|f_{\mathrm{G}} - V_{f_{\mathrm{G}}}\|_1}{\| f_{\mathrm{G}}\|_1} \approx 0.1011,\]
i.e., the VSDK interpolant gives a slightly better result with respect to the $L_1$-norm.

Again, we want to point out that in case that the discontinuities are not a priori given the output of the VSDK interpolation strongly depends on the performance of the edge detector. If the edges are detected in a reliable way, also the final VSDK interpolation has a good overall quality. For this compare Figure \ref{fig:5b} also with the reconstruction in Figure \ref{fig:4} (left) in which the scaling function with the correct information of the edges was used. On the other hand, as discussed in Section \ref{sec:shiftmask}, if the scaling function $\psi$ is badly chosen also the final reconstruction is seriously affected by artifacts.

\section{Applications in Magnetic Particle Imaging} \label{sec5}

\begin{figure}[htbp]
	\centering
	\includegraphics[width= 1\textwidth]{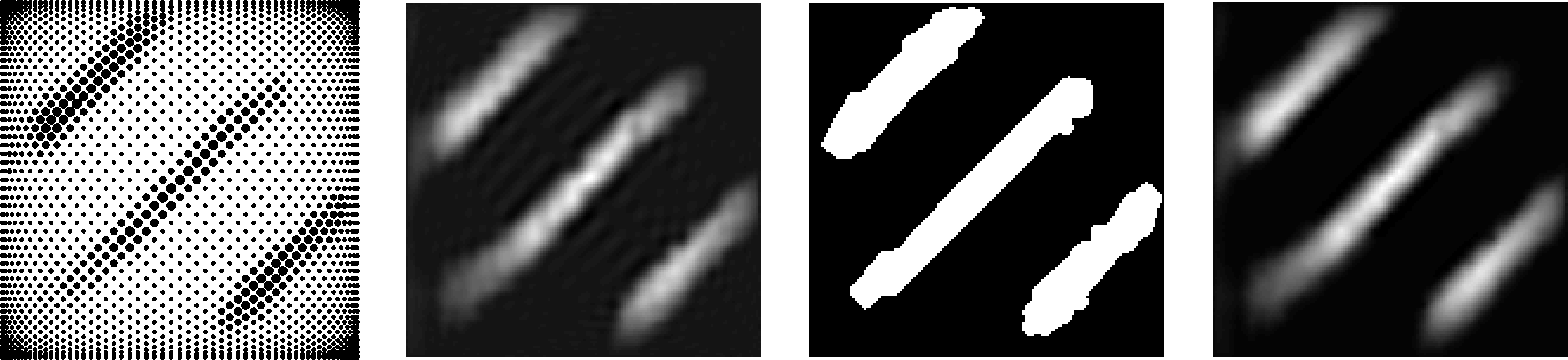}
	\caption{Comparison of different interpolation methods in MPI. The reconstructed data
	on the Lissajous nodes $\bs{\mathrm{LS}}_{2}^{(33,32)}$ (left) is first interpolated
	using the polynomial scheme derived in \cite{erb} (middle left). Using a scaling function constructed     upon a threshold strategy (middle right) the second interpolation is performed by the VSDK scheme (right).}
	\label{fig:6}
\end{figure}

In the early 2000s, B. Gleich and J. Weizenecker \cite{GleichNat}, invented at Philips Research in Hamburg a new quantitative imaging method called Magnetic Particle Imaging (MPI).
In this imaging technology, a tracer consisting of superparamagnetic iron oxide nanoparticles is injected and then detected through the superimposition of different magnetic fields. In common MPI scanners, the acquisition of the signal is performed following a generated field free point (FFP) along a chosen sampling trajectory. The determination of the particle distribution given the measured voltages in the receive coils is an ill-posed inverse problem that can be solved only with proper regularization techniques \cite{knopp}.

Commonly used trajectories in MPI are Lissajous curves \cite{knopp_lissa}. To reduce the amount of calibration measurements, it is shown in \cite{KEASKB2016} that the reconstruction can be restricted to particular sampling points along the Lissajous curves, i.e., the Lissajous nodes $\bs{\mathrm{LS}}_{2}^{(\bs{n})}$ introduced in \eqref{eq:LSpoints}. By using a polynomial interpolation method on the Lissajous nodes \cite{erb} the entire density of the magnetic particles can then be restored. These sampling nodes and the corresponding polynomial interpolation can be seen as an extension of a respective theory on the Padua points \cite{bos,bos1}.

\newpage

If the original particle density has sharp edges, the polynomial reconstruction scheme on the Lissajous nodes is affected by the Gibbs phenomenon. As shown in \cite{Marchetti}, post-processing filters can be used to reduce oscillations for polynomial reconstruction in MPI. In the following, we demonstrate that the usage of the VSDK interpolation method in combination with the presented edge estimator effectively avoids ringing artifacts in MPI and provides reconstructions with sharpened edges.

\subsection{Description}
As a test data set, we consider MPI measurements conducted in \cite{KEASKB2016} on a phantom consisting of three tubes filled with Resovist, a contrast agent consisting of superparamagnetic iron oxide. By the proceeding described in \cite{KEASKB2016} we then obtain a reconstruction of the particle density on the Lissajous nodes $\bs{\mathrm{LS}}_{2}^{(33,32)}$. This reduced reconstruction on the Lissajous nodes is illustrated in Figure \ref{fig:6} (left). A computed polynomial interpolant of this data is shown in Figure \ref{fig:6} (middle, left). In this polynomial interpolant some ringing artifacts are visible. In order to obtain the labeling for the classification algorithm, we use the simple
thresholding strategy described in Section \ref{sec:manualstrategy1} using $1/5$ of the maximal signal strength as a threshold for a binary
classification. The scaling function $\psi$ for the VSDK scheme is then obtained by using the classification algorithm of Section \ref{sec:kernelmachine} with a Gauss function for the kernel machine. The resulting scaling function is visualized in Figure \ref{fig:6} (middle, right). Using the $C^0$-Mat{\'e}rn kernel for the VSDK interpolation, the final interpolant
for the given MPI data is shown in in Figure \ref{fig:6} (right).

\subsection{Results and discussion}
In the polynomial interpolation shown in Figure \ref{fig:6} (middle, left) ringing artifacts are visible. These artifacts could be removed by using Algorithm \ref{alg:41} for the MPI data instead. As an alternative to the applied manual thresholding strategy, it is also possible to use the automated strategy given in Section \ref{sec:autostrategy} in which the $\mathrm{k}$-means algorithms gives the labeling of the data. This second strategy yields classification and reconstruction results that are very close to the ones displayed for the manual strategy in Figure \ref{fig:6}.

\section{Conclusions} \label{sec6}

To reflect discontinuities of a function or an image in the interpolation of scattered data we studied techniques based on the use of variably scaled discontinuous kernels. We obtained a characterization and theoretical Sobolev type error estimates for the native spaces generated by these discontinuous kernels. Numerical experiments confirmed the theoretical convergence rates and investigated the behavior of the interpolants if the scaling function describing the discontinuities is perturbed.

Interpolation with discontinuous kernels can only be conducted if the discontinuities of the function are known. If the discontinuities are not known, sophisticated methods are necessary to approximate the edges from given scattered data. In this work, we used kernel machines, trained with the given data, to obtain the edges and the segmentation of the image.

The results of the VSDK method applied to Magnetic Particle Imaging are promising and show that the Gibbs phenomenon can be sensibly reduced. Work in progress consists in using kernel machines also for regression with VSDKs. This might be of interest when approximating time series with jumps.

\section*{Acknowledgements}

This research has been accomplished within Rete ITaliana di Approssimazione (RITA) and was partially funded by GNCS-IN$\delta$AM and by the European Union's Horizon 2020 research and innovation programme ERA-PLANET, grant agreement no. 689443, via the GEOEssential project.

\end{document}